\newtheorem{theo}{Theorem}[section]
\newtheorem{lemma}[theo]{Lemma}
\newtheorem{defn}[theo]{Definition}
\newtheorem{cor}[theo]{Corollary}
\newenvironment{proof}{\noindent {\sc Proof}.}
                {\phantom{a} \hfill \framebox[2.2mm]{ } \bigskip}
\newcommand{\ZZ}{\mathbb{Z}}
\def\int{{\rm int}}
\newcommand{\F}{{\mathcal{F}}}
\newcommand{\T}{{\mathcal{T}}}
\newcommand{\C}{{\mathcal{C}}}
\newcommand{\G}{{\mathcal{G}}}
\renewcommand{\phi}{\varphi}
\newcommand\mset[1]{\left\{\!\!\left\{#1\right\}\!\!\right\}}  
\title{Spanning Euler tours and spanning Euler families \\ in hypergraphs with particular vertex cuts}
\author{Mateja \v{S}ajna and Yan D. Steimle\footnote{Main author. Email: ystei087@uottawa.ca. Mailing address: Department of Mathematics and Statistics, University of Ottawa, 585 King Edward Avenue, Ottawa, ON, K1N 6N5,Canada.} \\
{\small University of Ottawa}}
\begin{document}
\maketitle \baselineskip 17pt

\begin{abstract}
An {\em Euler tour} in a hypergraph is a closed walk that traverses each edge of the hypergraph exactly once, while an {\em Euler family}, first defined by Bahmanian and \v{S}ajna, is a family of closed walks that jointly traverse each edge exactly once and cannot be concatenated. In this paper, we study the notions of a {\em spanning Euler tour} and a {\em spanning Euler family}, that is, an Euler tour (family) that also traverses each vertex of the hypergraph at least once. We examine necessary and sufficient conditions for a hypergraph to admit a spanning Euler family, most notably, when the hypergraph possesses a vertex cut consisting of vertices of degree two. Moreover, we characterise hypergraphs with a vertex cut of cardinality at most two that admit a spanning Euler tour (family). This result enables us to reduce the problem of existence of a spanning Euler tour (which is NP-complete), as well as the problem of a spanning Euler family, to smaller hypergraphs.

\medskip
\noindent {\em Keywords:} Hypergraph; Euler tour; spanning Euler tour; Euler family; spanning Euler family; vertex cut.
\end{abstract}

\section{Introduction}

One of the best known and most accessible results in graph theory, Euler's Theorem, states that a connected graph admits an Euler tour --- that is, a closed walk traversing each edge of the graph exactly once --- if and only if all vertices of the graph have even degree. In addition to the most obvious way to generalise the notion of an Euler tour to hypergraphs, which has been studied in \cite{lonc-naroski,eulerianHG,Quasi-eulerianHG}, Bahmanian and \v{S}ajna \cite{eulerianHG,Quasi-eulerianHG} also introduced the notion of an {\em Euler family}, which is a family of closed walks that jointly traverse each edge of the hypergraph exactly once and cannot be concatenated. For connected graphs, the notions of an Euler tour and Euler family coincide; for general connected hypergraphs, however, they give rise to two rather distinct problems, the former NP-complete and the latter of polynomial complexity \cite{lonc-naroski, Quasi-eulerianHG}.

In this paper, we investigate eulerian substructures that are {\em spanning}; that is, in addition to traversing each edge exactly once, they also traverse each vertex of the hypergraph at least once. In a connected graph, every Euler tour is spanning; in a general connected hypergraph, however, not every Euler tour or family is spanning.

This paper is organised as follows. After an overview of basic hypergraph terminology in Section~\ref{sec:prelim}, we present in Section~\ref{sec:general}  some basic necessary conditions for a hypergraph to admit a spanning Euler family, as well as a characterisation of such hypergraphs via their incidence graphs. In Sections~\ref{sec:vx-cuts}--\ref{sec:2-vx-cuts}, we then focus on the impact of particular vertex cuts on the existence of a spanning Euler tour (family). In our first main result, Theorem~\ref{the:deg2}, we show that a hypergraph $H$ with a minimal vertex cut  consisting of vertices of degree two admits a spanning Euler family if and only if some of its derived hypergraphs (hypergraphs closely related to particular subhypergraphs of $H$)  admit spanning Euler families. Moreover, in Theorems~\ref{the:cutvx} and \ref{the:even}--\ref{the:oddSET}, we show that a hypergraph with a vertex cut  of cardinality at most two admits a spanning Euler family (tour) if and only if some of its derived hypergraphs admit spanning Euler families (tours). Hence, when studying the problem of existence of a spanning Euler family or tour, it suffices to consider connected hypergraphs without such vertex cuts, thereby reducing the problem.

\section{Preliminaries}\label{sec:prelim}

We begin with some basic concepts related to hypergraphs, which will be used in later discussions. For any graph- and hypergraph-theoretic terms not defined here, we refer the reader to \cite{GT} and \cite{HGconnection}, respectively.

A {\em hypergraph} $H$ is an ordered pair $(V,E)$, where $V$ is a non-empty finite set and $E$ is a finite multiset of $2^V$. (To denote multisets, we shall use double braces, $\mset{.}$.) The elements of $V = V(H)$ and $E = E(H)$ are called  {\em vertices} and  {\em edges}, respectively. A hypergraph is said to be {\em empty} if it has no edges.

Let $H = (V,E)$ be a hypergraph, and $u,v\in V$. If $u \ne v$ and there exists an edge $e\in E$ such that $u,v\in e$, then we say that $u$ and $v$ are {\em adjacent (via the edge $e$)}, and that $u$ is a {\em neighbour} of $v$  in $H$. The set of all neighbours of $v$ in $H$ is called the {\em neighbourhood} of $v$ in $H$, and is denoted by $N_H(v)$.
Two distinct edges $e,f\in E$ are {\em adjacent} in $H$ if $e\cap f \ne \emptyset$. If $v\in V$ and  $e\in E$ are such that $v\in e$, then $v$ is said to be {\em incident} with $e$, and the ordered pair $(v,e)$ is called a {\em flag} of $H$. The set of flags of $H$ is denoted by $F(H)$.
The {\em degree} of a vertex $v\in V$ is the number of edges in $E$ incident with $v$, and is denoted by $\deg_H(v)$, or simply $\deg(v)$ when there is no ambiguity. A vertex of degree 1 is called {\em pendant}. 

The {\em incidence graph} of a hypergraph $H = (V,E)$ is the graph $\mathcal{G}(H) = (V_G,E_G)$ where $V_G = V\cup E$ and $E_G = \lbrace ve: (v,e) \in F(H) \rbrace$. Hence, $\mathcal{G}(H)$ is simple with bipartition $\lbrace V,E\rbrace$, and $E_G$ can be identified with the flag set $F(H)$. Furthermore, we call $x\in V_G$ a {\em v-vertex} if $x\in V$, and an {\em e-vertex} if $x\in E$.

A hypergraph $H' = (V',E')$ is called a {\em subhypergraph} of the hypergraph $H = (V,E)$ if $V'\subseteq V$ and $E'=\mset{ e \cap V': e \in E''}$ for some submultiset $E''$ of $E$. For any subset $V'\subseteq V$, we define the {\em subhypergraph of $H$ induced by} $V'$ to be the hypergraph $(V',E')$ with $E' = \mset{e\cap V': e\in E, e\cap V'\ne\emptyset}$. Thus, we obtain the subhypergraph induced by $V'$  by deleting all vertices in $V - V'$ from $V$ and from each edge of $H$, and subsequently deleting all empty edges. By $H \backslash V'$ we denote  the subhypergraph of $H$ induced by $V - V'$, and for $v \in V$, we write shortly $H\backslash v$ instead of $H\backslash \{ v\}$.
Similarly, for any subset $E'\subseteq E$, we denote the subhypergraph $(V, E - E')$ of $H$ by $H - E'$, and for $e \in E$, we write $H-e$ instead of $H- \{ e\}$.

A $k$-length $(v_0,v_k)$-{\em walk} in a hypergraph $H$ is an alternating sequence $W = v_0e_1v_1\ldots$ $ v_{k-1}e_kv_k$ of (possibly repeated) vertices and edges such that $v_0,\ldots,v_k\in V$, $e_1,\ldots,e_k\in E$, and for each $i\in\lbrace 1,\ldots,k\rbrace$,
the vertices $v_{i-1}$ and $v_i$ are adjacent in $H$ via the edge $e_i$.
Note that since adjacent vertices are by definition distinct, no two consecutive vertices in a walk can be the same. It follows that no walk in a hypergraph may contain an edge of cardinality less than 2. The vertices in $V_a(W) = \lbrace v_0,\ldots,v_k\rbrace$ are called the {\em anchors} of $W$, $v_0$ and $v_k$ are the {\em endpoints} of $W$, and $v_1,\ldots,v_{k-1}$ are the {\em internal vertices} of $W$. We also define the edge set of $W$ as $E(W)=\lbrace e_1,\ldots,e_k\rbrace$, and the set of {\em anchor flags} of $W$ as $F(W)=\{ (v_0,e_1), (v_1,e_1), (v_2, e_2), \ldots, (v_{k-1},e_k), (v_k, e_k) \}$.
Walks $W$ and $W'$ in a hypergraph $H$ are said to be {\em edge-disjoint} if $E(W)\cap E(W') = \emptyset$, and {\em anchor-disjoint} if $V_a(W)\cap V_a(W') = \emptyset$.

A walk $W = v_0e_1v_1\ldots v_{k-1}e_kv_k$  is called {\em closed} if $v_0 = v_k$ and $k \ge 2$;  a {\em (strict) trail} if the edges $e_1,\ldots,e_k$ are pairwise distinct; a {\em path} if it is a trail and the vertices $v_0,\ldots,v_k$ are pairwise distinct; and a {\em cycle} if it is a closed  trail and the vertices $v_0,\ldots,v_{k-1}$ are pairwise distinct. Note that a strict trail as defined above has no repeated anchor flags. In \cite{HGconnection}, a walk with this property, but possibly with repeated edges, was called a {\em trail}. In this paper, we shall consider only strict trails, and hence use the shorter term ``trail'' to mean a ``strict trail''.

A walk $W = v_0e_1v_1\ldots v_{k-1}e_kv_k$ is said to {\em traverse} a vertex $v$ and edge $e$ if $v \in V_a(W)$ and $e \in E(W)$, respectively. More precisely, $W$ traverses $e \in E$ exactly $t$ times if $e=e_i$  for exactly $t$ indices $i \in \{ 1,\ldots,k\}$, and traverses $v\in V$ exactly $t$ times if $v=v_i$  for exactly $t$ indices $i \in \{ 1,\ldots,k\}$ in the case $W$ is closed, and exactly $t$ indices $i \in \{ 0,1,\ldots,k\}$ otherwise.

Vertices $u$ and $v$ are {\em connected} in a hypergraph $H$ if there exists a $(u,v)$-walk (equivalently, a $(u,v)$-path \cite[Lemma 3.9]{HGconnection}) in $H$, and
$H$ itself is {\em connected} if every pair of vertices in $V$ are connected in $H$. The {\em connected components} of $H$ are the maximal connected subhypergraphs of $H$ without empty edges. The number of connected components of $H$ is denoted by $c(H)$.

An {\em Euler tour} of a hypergraph $H$ is a closed trail of $H$ traversing every edge of $H$. An {\em Euler family} of $H$ is a set of pairwise edge-disjoint and anchor-disjoint closed trails of $H$ jointly traversing every edge of $H$.

\section{Spanning Euler tours and spanning Euler families}\label{sec:SETs-and-SEFs}

\begin{defn}\label{def:span-Euler-type-struct}{\rm
An Euler tour $T$ of a hypergraph $H$ is said to be {\em spanning} if every vertex of $H$ is an anchor of $T$.
An Euler family $\F$ of a hypergraph $H$ is said to be {\em spanning} if every vertex of $H$ is an anchor of exactly one trail in $\F$.}
\end{defn}

A (spanning) Euler tour may be thought of as a (spanning) Euler family consisting of a single closed trail; however, a hypergraph may admit a spanning Euler family but no spanning Euler tour (see Figure~\ref{fig:EG3}).

Observe that a hypergraph admits a spanning Euler family if and only if each of its connected components admits a spanning Euler family. Therefore, we may limit our investigation of spanning Euler families to connected hypergraphs, and since  empty edges have no impact on connectedness, we shall assume our hypergraphs have no empty edges.

\subsection{General existence results}\label{sec:general}

\begin{figure}[t]
\centerline{\includegraphics[scale=1.00]{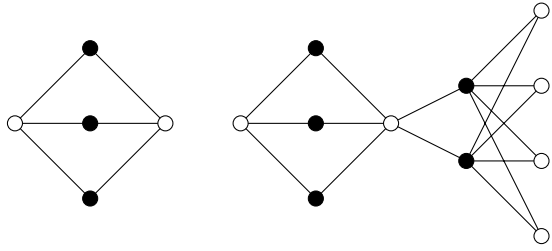}}
\caption{The incidence graphs of two (connected) hypergraphs that admit an Euler family but no spanning Euler family (v-vertices are coloured black).}
\label{fig:EG1}
\end{figure}

Clearly, any hypergraph with a spanning Euler family also admits an Euler family. The converse, however, does not hold, as illustrated by the two examples in  Figure~\ref{fig:EG1}.
A general example is obtained from any hypergraph $H$ that admits an Euler family by adjoining a new vertex to a single edge of $H$; the resulting hypergraph has an Euler family but no spanning Euler family, because no closed trail can traverse a pendant vertex.
These examples suggest some basic necessary conditions for a hypergraph to have a spanning Euler family.

\begin{lemma}\label{lem:evident-nec-cond}
Let $H = (V,E)$ be a hypergraph. If $H$ admits a spanning Euler family, then
\begin{enumerate}[(i)]
\item $|e| \ge 2$ for all $e\in E$,
\item $\deg_H(v) \geq 2$ for all $v\in V$,	
\item $2 \le |V| \le  |E|$, and
\item for each subset $E'\subseteq E$ such that $|E'| = k \ge 2$,  the cardinality of the set
$$\left\{ v \in \bigcap_{e\in E'} e:  \deg_H(v) = k \right\}$$ is at most $k$.
	\end{enumerate}
\end{lemma}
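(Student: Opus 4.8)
The plan is to assume that $H$ admits a spanning Euler family $\F$ and to read each of the four conditions off the combinatorics of $\F$. Conditions (i)--(iii) are quick. For (i): every edge of $H$ is traversed by some trail of $\F$, and, as observed in Section~\ref{sec:prelim}, no walk in a hypergraph may contain an edge of cardinality less than $2$; hence $|e|\ge 2$ for every $e\in E$. For (ii): since $\F$ is spanning, each $v\in V$ is an anchor of some closed trail $W=v_0e_1v_1\cdots v_{k-1}e_kv_k$ of $\F$ (so $v_0=v_k$ and $k\ge 2$); writing $v=v_i$, the two edges of $W$ flanking this occurrence of $v$ --- namely $e_i$ and $e_{i+1}$ when $1\le i\le k-1$, and $e_1$ and $e_k$ when $i\in\{0,k\}$ --- are both incident with $v$ and are distinct, since the edges of a trail are pairwise distinct and $k\ge 2$; hence $\deg_H(v)\ge 2$.

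For (iii): a hypergraph with no edges has no closed trails, so its only Euler family is the empty one, which is not spanning as $V\ne\emptyset$; hence $E\ne\emptyset$, and picking any $e\in E$ and using (i) gives $|V|\ge|e|\ge 2$. For the bound $|V|\le|E|$, I would count: for each closed trail $W=v_0e_1v_1\cdots v_{k-1}e_kv_k$ of $\F$ we have $V_a(W)=\{v_0,\ldots,v_{k-1}\}$, so $|V_a(W)|\le k=|E(W)|$. As $\F$ is a spanning Euler family, the anchor sets $\{V_a(W):W\in\F\}$ partition $V$ and the edge sets $\{E(W):W\in\F\}$ partition $E$, so summing the inequality over $W\in\F$ gives $|V|=\sum_{W\in\F}|V_a(W)|\le\sum_{W\in\F}|E(W)|=|E|$.

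Condition (iv) is the substantive one, and I would prove it by double-counting flags. Fix $E'\subseteq E$ with $|E'|=k\ge 2$ and put $S=\{v\in\bigcap_{e\in E'}e:\deg_H(v)=k\}$, so that each $v\in S$ is incident with exactly the $k$ edges of $E'$. Let $\Phi$ be the set of flags $(v,e)$ with $v\in S$ and $e\in E'$ that are anchor flags of some (necessarily unique) trail of $\F$. On one hand, each $v\in S$ is an anchor of exactly one trail $W_v\in\F$, so every anchor flag at $v$ lies in $W_v$; each occurrence of $v$ in $W_v$ contributes two such flags, and all of these are distinct (two consecutive vertices of a walk differ, hence $v$ cannot occupy two cyclically consecutive slots of $W_v$, and the edges of a trail are distinct, so no edge flanks two distinct occurrences of $v$); moreover each edge flanking $v$ is incident with $v$ and hence lies in $E'$. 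Thus $|\Phi|\ge 2|S|$. On the other hand, each $e\in E'$ is traversed exactly once, by a single trail of $\F$, and that single traversal contributes exactly two anchor flags containing $e$; so at most two flags of $\Phi$ have second coordinate $e$, whence $|\Phi|\le 2|E'|=2k$. Combining these bounds gives $2|S|\le 2k$, that is, $|S|\le k$.

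I expect the only delicate point to be the lower bound $|\Phi|\ge 2|S|$ in (iv): one must verify that the two anchor flags produced by distinct occurrences of a single $v\in S$ in $W_v$ are genuinely distinct, so that nothing is over-counted. This follows from a short inspection of the cyclic structure of a closed trail (occurrences of $v$ cannot be at consecutive cyclic positions, so distinct occurrences are flanked by disjoint pairs of edges); everything else in the argument is routine.
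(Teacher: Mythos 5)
Your proof is correct and takes essentially the same route as the paper: (i)--(iii) are handled by the same elementary observations and the same anchors-versus-edges count over the closed trails, and (iv) is the paper's double count of flags --- your set $\Phi$ is precisely the set of flags of $\F$ containing a vertex of $S$ (equivalently an edge of $E'$), bounded below by $2|S|$ and above by $2k$. The only difference is that you take extra care with distinct occurrences of $v$ in (iv), which is harmless but unnecessary, since a single occurrence of $v$ already yields two distinct flags.
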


\begin{proof}
(i) and (ii) are easy to see.
\begin{enumerate}[(i)]
\item[(iii)] Let $\mathcal{F}$ be a spanning Euler family of $H$. Since each closed trail in $\mathcal{F}$ traverses at least 2 vertices, we have $|V| \ge 2$. Next, observe that if $T = v_0e_1v_1\cdots v_{n-1}e_nv_0$ is a closed trail in $H$, then $|E(T)| = n$ and $|V_a(T)| \leq n$. Since each vertex and each edge of $H$ occur in exactly one of the closed trails in $\mathcal{F}$, it follows that $| V|\leq| E|$.
\item[(iv)] Let  $E' \subseteq E$ be such that $|E'|=k \ge 2$. Define the set
$S=\left\{ v \in \bigcap_{e \in E'} e: \deg_H(v)=k \right\},$
and let $\ell=|S|$. Take any spanning Euler family $\F$ of $H$ and let $F(\F)$ be the set of all flags traversed by the closed trails in $\F$. Observe that exactly $2k$ flags in $F(\F)$ contain edges of $E'$, and at least $2\ell$ flags in $F(\F)$ contain vertices in $S$. Since every flag containing a vertex in  $S$ must also contain an edge in $E'$, it follows that $2\ell \le 2k$.
\end{enumerate}
\vspace*{-1.0cm}
\end{proof}

Observe that the examples in Figure~\ref{fig:EG1} fail Conditions (iii) and (iv), respectively. It is natural to ask whether  the necessary conditions in Lemma~\ref{lem:evident-nec-cond} are also sufficient. The example in Figure~\ref{fig:EG2} shows that this is not the case. Observe that this hypergraph, call it $H$, has a cut vertex $v$, and that for one of the connected components of $H\backslash v$, call it $H_1$, neither $H_1$ nor the subhypergraph of $H$ induced by $V(H_1) \cup \{v\}$ admits a spanning Euler family (see Theorem~\ref{the:cutvx}). This example suggests that, in order to determine whether a hypergraph admits a spanning Euler family, it is important to consider its cut vertices (and, more generally, vertex cuts), which will be the topic of the remaining sections.

\begin{figure}[b]
\centerline{\includegraphics[scale=1.00]{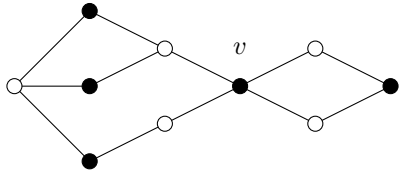}}
\caption{The incidence graph of a hypergraph that satisfies the necessary conditions in Lemma~\ref{lem:evident-nec-cond} but has no spanning Euler family (v-vertices are coloured black). }\label{fig:EG2}
\end{figure}

We briefly mention two other characterisations of hypergraphs admitting spanning Euler families (and tours). The first one, a characterisation in terms of the incidence graph, is analogous to the characterisation of hypergraphs admitting general Euler families and Euler tours; see \cite[Theorem 2.18]{eulerianHG} and \cite[Lemma 1]{Quasi-eulerianHG}. The proof is similar, straightforward, and hence omitted.

\begin{theo}\label{thm:SEF-inc-graph-char}
Let $H = (V,E)$ be a non-empty connected hypergraph, and $G = \mathcal{G}(H)$ its incidence graph. Then, $H$ has a spanning Euler family  if and only if $G$ has a subgraph $G'$ in which every e-vertex is of degree 2 and every v-vertex is of positive even degree. Moreover, $H$ has a spanning Euler tour if and only if $G$ has such a subgraph with a single connected component.
\end{theo}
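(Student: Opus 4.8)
The plan is to translate between closed trails in $H$ and certain subgraphs of the incidence graph $G = \mathcal{G}(H)$, exactly as in the proofs of \cite[Theorem 2.18]{eulerianHG} and \cite[Lemma 1]{Quasi-eulerianHG}, but keeping careful track of the ``spanning'' condition. First I would recall the standard correspondence: a closed trail $T = v_0 e_1 v_1 \cdots v_{k-1} e_k v_0$ of $H$ yields, in $G$, the closed walk $v_0 e_1 v_1 \cdots v_{k-1} e_k v_0$ whose edge set (as a subset of $E_G = F(H)$) is precisely the set of anchor flags $F(T)$. Because $T$ is a \emph{strict} trail, it has no repeated anchor flags, so this closed walk in $G$ is in fact a closed \emph{trail} of $G$; conversely, any closed trail of $G$ alternates between v-vertices and e-vertices and hence reads off as a closed trail of $H$. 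In this correspondence, each e-vertex $e$ lying on the $G$-trail has degree $2$ in the trail (it is entered and exited once), each v-vertex $v$ on the trail has even positive degree (equal to twice the number of times $T$ passes through $v$), and the v-vertices appearing are exactly the anchors $V_a(T)$.

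Next I would assemble an Euler family $\F = \{T_1, \ldots, T_m\}$ of $H$ into a single subgraph $G'$ of $G$: let $G'$ be the subgraph with edge set $\bigcup_i F(T_i)$ together with all incident vertices. Since the $T_i$ are pairwise edge-disjoint, every e-vertex of $H$ that appears is an e-vertex of exactly one $T_i$, hence has degree $2$ in $G'$; since $\F$ is an Euler family, \emph{every} e-vertex of $G$ appears, so every e-vertex of $G'$ has degree $2$. Since the $T_i$ are pairwise anchor-disjoint, the v-vertex degrees add up cleanly, so every v-vertex of $G'$ has positive even degree. Thus $\F$ being \emph{spanning} — every vertex of $H$ is an anchor of (exactly one) $T_i$ — is equivalent to: every v-vertex of $G$ lies in $G'$, i.e.\ $G'$ has every e-vertex of degree $2$ and every v-vertex of positive even degree (where ``every'' now ranges over all of $V_G$, not just those vertices that happen to appear). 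Conversely, given such a subgraph $G'$, I would decompose it into closed trails: restricting to any connected component of $G'$, every vertex has positive even degree, so it has an Euler tour (of that component of $G'$), which reads back as a closed trail of $H$; the components being vertex-disjoint makes the resulting trails pairwise edge- and anchor-disjoint, and together they traverse every edge of $H$ (each e-vertex of $G$ has degree $2$ in $G'$, so lies in some component) and every vertex of $H$ (each v-vertex of $G$ has positive degree in $G'$). Hence $H$ has a spanning Euler family. The ``moreover'' for spanning Euler tours is the special case where $G'$ is required to be connected: a connected $G'$ of the stated type is Eulerian and its Euler tour is a single closed trail of $H$ that is spanning, and conversely a spanning Euler tour produces a connected such $G'$.

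The only mild subtlety — and the step I would be most careful about — is the bookkeeping around vertices of $H$ (equivalently v-vertices of $G$) that are \emph{isolated} in $G'$, i.e.\ have degree $0$: the phrase ``every v-vertex is of positive even degree'' in the statement is exactly what rules these out and thereby encodes spanning, so I would make sure the logical equivalence ``anchor of some trail $\iff$ positive degree in $G'$'' is spelled out, rather than swept into the word ``clearly''. I should also note in passing that an isolated \emph{e}-vertex is automatically excluded by the degree-$2$ condition, and that $H$ connected is only needed to guarantee $G$ is connected (used nowhere essential above except to match the hypothesis of the cited theorems); the construction of $G'$ from $\F$ and of $\F$ from $G'$ does not itself require connectivity. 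Everything else is the routine alternating-walk/Euler-tour-of-a-graph argument, so, as in the cited references, a full write-up is short; I would present it at roughly the level of detail above.
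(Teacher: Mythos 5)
Your argument is correct and is precisely the standard incidence-graph correspondence that the paper has in mind when it omits the proof as ``similar, straightforward'' to those of \cite[Theorem 2.18]{eulerianHG} and \cite[Lemma 1]{Quasi-eulerianHG}, with the right extra bookkeeping (positive degree at every v-vertex of $G$ encoding the spanning condition, and connectedness of $G'$ encoding the tour case). No gaps; nothing further is needed.
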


The second characterisation of hypergraphs admitting spanning Euler families can be obtained just as in  \cite[Corollary 6.2]{Quasi-eulerianHG} using Lovasz's Theorem \cite{Lov}, which gives necessary and sufficient conditions for the existence of a $(g,f)$-factor in a graph.  The resulting  necessary and sufficient conditions for a hypergraph to admit a spanning Euler family are rather complex and not easily verifiable.

Since the problem of existence of an Euler tour is NP-complete even on some restricted families of hypergraphs \cite{lonc-naroski}, so is the problem of existence of a spanning Euler tour. In contrast, the problem of existence of an Euler family is polynomial on the set of all hypergraphs  \cite[Theorem 7.2]{Quasi-eulerianHG}, and the same result for spanning Euler families can be proved in a very similar way, using a polynomial conversion to the problem of existence of an $f$-factor in a graph.

\subsection{Vertex cuts}\label{sec:vx-cuts}

As we shall see, vertex cuts  (to be defined below) play an important role in the existence of spanning eulerian substructures in a hypergraph.

\begin{defn}{\rm
Let $H = (V,E)$ be a hypergraph. A subset $S\subsetneq V$ is said to be a {\em vertex cut} of $H$ if $H\backslash S$ is disconnected. A {\em k-vertex cut} of $H$ is a vertex cut of cardinality $k$.}
\end{defn}

Recall that a {\em cut vertex} in a hypergraph $H$ with at least 2 vertices is a vertex $v$ such that $c(H\backslash v) > c(H)$ \cite[Definition 3.22]{HGconnection}. Thus, in a connected hypergraph, cut vertices correspond precisely to 1-vertex cuts. A vertex cut $S$ of $H$ is said to be {\em minimal} if no proper subset of $S$ is a vertex cut of $H$.

The following basic observations on (minimal) vertex cuts in a hypergraph will be helpful in the proofs of our main results. For an edge $e$ of a hypergraph $H=(V,E)$, we say that $e$ {\em intersects} a set $S \subseteq V$ if $e \cap S \ne \emptyset$, and $e$ {\em intersects} a subhypergraph $H'$ of $H$ if $e \cap V(H') \ne \emptyset$.

\begin{lemma}\label{lem:2.2.2-2.2.4}
Let $H=(V,E)$ be a connected hypergraph and $S$ a  vertex cut of $H$. Then the following hold.
\begin{enumerate}[(1)]
\item Every edge intersecting $S$ contains vertices from at most one connected component of $H\backslash S$.
\item If $S$ is a minimal vertex cut, then
\begin{enumerate}[(a)]
\item each vertex in $S$ is adjacent in $H$ to at least one vertex in each connected component of $H\backslash S$; and
\item $2 \le c(H\backslash S)\le \min \{ \deg_{H'}(v): v \in S \}$, where $H'=H-E'$ and $E'=\{e \in E: e \subseteq S\}$.
\end{enumerate}
\end{enumerate}
\end{lemma}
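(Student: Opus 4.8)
The plan is to prove the three assertions in sequence, each by a short argument exploiting the definitions of walk/connectedness and, for (2), the minimality of $S$.

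For part (1), I would argue by contradiction. Suppose an edge $e$ with $e \cap S \ne \emptyset$ contains vertices $u \in V(H_1)$ and $w \in V(H_2)$ for two distinct connected components $H_1, H_2$ of $H\backslash S$. Since $u$ and $w$ both lie in $e$ and are distinct, they are adjacent in $H$ via $e$, and hence (also being vertices of $H\backslash S$, since $u,w \notin S$) adjacent in $H\backslash S$ via the edge $e \cap (V - S)$, which has cardinality at least $2$. This places $u$ and $w$ in the same connected component of $H\backslash S$, contradicting $H_1 \ne H_2$. (If $e \cap (V-S)$ meets only one component, there is nothing to prove; the point is that it cannot meet two.)

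For part (2a), let $v \in S$ and let $H_i$ be any connected component of $H\backslash S$. Because $S$ is minimal, $S \backslash \{v\}$ is not a vertex cut, so $H \backslash (S \backslash \{v\})$ is connected; in particular $v$ is connected to some vertex of $H_i$ in $H \backslash (S\backslash\{v\})$. Take a shortest such $(v, x)$-walk $W$ with $x \in V(H_i)$; minimality of length forces every internal vertex and every anchor of $W$ other than $v$ to lie outside $S$ (if some internal anchor lay in $S\backslash\{v\}$ it would have been deleted; if it equalled $v$ or an earlier anchor the walk could be shortened). Thus $v$'s neighbour along the first edge $e_1$ of $W$ lies in $V - S$ and is connected to $x$ within $H\backslash S$, hence lies in $V(H_i)$; so $v$ is adjacent to a vertex of $H_i$. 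For part (2b), the lower bound $c(H\backslash S) \ge 2$ is just the definition of a vertex cut. For the upper bound, fix $v \in S$ and consider $H' = H - E'$ where $E' = \{e \in E : e \subseteq S\}$; note deleting the edges contained in $S$ changes neither $H\backslash S$ nor the fact that, by (1) together with (2a), every edge of $H'$ incident with $v$ that leaves $S$ meets exactly one component of $H\backslash S$, while by (2a) every component is reached by at least one such edge. Since distinct components require distinct edges at $v$, we get $c(H\backslash S) \le \deg_{H'}(v)$; taking the minimum over $v \in S$ gives the claim.

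The main obstacle is the bookkeeping in (2b): one must be careful that edges of $H$ entirely inside $S$ are exactly the ones that could be incident with $v$ yet "reach" no component of $H\backslash S$, which is why they are removed to form $H'$, and conversely that removing them does not disconnect any $H_i$ or reduce the count $c(H\backslash S)$. Once one observes, via part (1), that each edge of $H'$ at $v$ that is not contained in $S$ meets precisely one component, and via part (2a) that the surjection "component $\mapsto$ some edge at $v$ reaching it" exists, the counting inequality is immediate. Parts (1) and (2a) are the genuinely load-bearing steps; (2b) is their corollary.
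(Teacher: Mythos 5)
Your proposal is correct and follows essentially the same route as the paper: part (1) via the edge $e-S$ meeting two components of $H\backslash S$, and part (2b) by counting, for a vertex $v\in S$, the edges of $H'$ at $v$, each meeting at most one component by (1) and every component being reached by (2a). The only cosmetic difference is in (2a), where you use the connectivity of $H\backslash(S-\{v\})$ and a shortest-walk argument, whereas the paper argues the contrapositive (if $v$ had no neighbour in some component, $S-\{v\}$ would already be a vertex cut); both hinge on minimality in exactly the same way.
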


\begin{proof}
Let $H_1,\ldots,H_k$ be the connected components of $H\backslash S$, so $c(H\backslash S)=k$.
\begin{enumerate}[(1)]
\item Suppose there exists an edge $e$ intersecting $S$ such that $e \cap V(H_i) \ne \emptyset$ and $e \cap V(H_j) \ne \emptyset$ for $i \ne j$. Then $e'=e-S$ is an edge of $H \backslash S$ intersecting two of its connected components, a contradiction.
\item Assume $S$ is a minimal vertex cut.
\begin{enumerate}[(a)]
\item Take any $v \in S$ and let $S'=S -\{v\}$. Suppose there exists $i \in \{1,2,\ldots,k\}$ such that $N_H(v) \cap V(H_i)=\emptyset$. Then, for any $u \in V(H_i)$, we have that $N_H(u) \subseteq V(H_i) \cup S'$. It follows that $S'$ is a vertex cut of $H$, contradicting the minimality of $S$. Hence $N_H(v) \cap V(H_i) \ne \emptyset$ for all $i$.
\item Clearly, $k \ge 2$. It remains to show that $k \le d$, where $d=\min \{ \deg_{H'}(v): v \in S \}$. Let $v \in S$ be such that $\deg_{H'}(v)=d$. By (a), vertex $v$ is adjacent in $H$ to at least one vertex in each of $H_1,\ldots,H_k$, and by (1), each of the edges incident with $v$ in $H$ intersects at most one $H_i$. Since the edges in $E'$ intersect no $H_i$, we have $k \le d$.
\end{enumerate}
\end{enumerate}
\vspace*{-1.0cm}
\end{proof}

In the remainder of this paper,  we shall focus on the impact of particular vertex cuts on the  existence of a spanning Euler family or tour in a hypergraph $H$. First, we define the hypergraphs related to $H$ and its chosen vertex cut $S$ that will play a crucial role in the reduction of the problem.

\begin{defn}{\rm
Let $H = (V,E)$ be a connected hypergraph and $S\subsetneq V$ a vertex cut of $H$. Let $F$ be any connected component of $H\backslash S$.
\begin{itemize}
\item The {\em S-component $F'$ of $H$ corresponding to $F$} is the subhypergraph $(V',E')$ of $H$ with $V'=V(F)\cup S$ and $E' = \mset{ e\in E: e \subseteq V', e \ne \emptyset }$.
\item If $|S|$ is even, we define the {\em S*-component $F^*$ of $H$ corresponding to $F$} as the hypergraph obtained from the $S$-component $F'$ corresponding to $F$ by adjoining $\frac{|S|}{2}$ copies of the edge $S$.
\end{itemize}
}
\end{defn}

For a hypergraph $H$ with a vertex cut $S$ such that $c(H\backslash S) = k$, we generally denote the connected components of $H\backslash S$ by $H_1,\ldots,H_k$, the corresponding $S$-components of $H$ by $H_1',\ldots,H_k'$, and the corresponding $S^*$-components (if $\vert S\vert$ is even) by $H_1^*,\ldots,H_k^*$. We shall refer to all these hypergraphs as the {\em derived hypergraphs of $H$ (with respect to the vertex cut $S$)}.

\subsection{Minimal vertex cuts consisting of vertices of degree two}\label{sec:vx-cuts-deg-2}

We are now ready for our first main result --- the characterisation of hypergraphs with minimal vertex cuts consisting of vertices of the smallest degree possible that admit spanning Euler families.

\begin{theo}\label{the:deg2}
Let $H$ be a connected hypergraph with a minimal vertex cut $S$ such that $\deg_H(v)=2$ for all $v \in S$.
\begin{enumerate}[(1)]
\item If $|S|$ is odd, then $H$ admits no spanning Euler family.
\item If $|S|$ is even, then $H$ admits a spanning Euler family if and only if both $S^\ast$-components of $H$ admit a spanning Euler family,
\end{enumerate}
\end{theo}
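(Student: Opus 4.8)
The plan is to analyze how a spanning Euler family $\F$ of $H$ interacts with the vertex cut $S$. Let $H_1,\ldots,H_k$ be the connected components of $H\backslash S$, with $k\ge 2$ by Lemma~\ref{lem:2.2.2-2.2.4}(2b). Since every vertex $v\in S$ has $\deg_H(v)=2$, the two edges incident with $v$ are the only edges that can carry a trail through $v$. By Lemma~\ref{lem:2.2.2-2.2.4}(1), each such edge is contained in $V(H_i)\cup S$ for exactly one $i$, so I can label the two edges at $v$ by which component they "belong to". First I would observe that, in any spanning Euler family, each $v\in S$ is an anchor of exactly one trail $T$, and since $\deg_H(v)=2$, the trail $T$ must use both edges at $v$ consecutively as it passes through $v$ (it enters and leaves $v$ via its two incident edges). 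The key structural claim is then: the number of vertices of $S$ whose two incident edges "belong to the same component $H_i$" versus "belong to two different components" is tightly constrained. A vertex $v\in S$ with both edges belonging to the same $H_i$ contributes a pass that stays on the $H_i$-side; a vertex $v$ with its two edges belonging to distinct components forces the trail through $v$ to cross from one side to the other.

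For part~(1), suppose $|S|$ is odd and $\F$ is a spanning Euler family. Consider the bipartite-like incidence structure, or more concretely count crossings: think of $\F$ restricted to the flags at vertices of $S$. Each trail in $\F$, as a closed walk, visits the "region" $\bigcup_i V(H_i)\cup\{$interior of $S\}$ and must return; I would make this precise by contracting each component $H_i$ to a single super-vertex and noting that each trail of $\F$ induces a closed walk in the resulting small graph on vertex set $\{H_1,\ldots,H_k\}\cup(\text{flags at }S)$, where each $v\in S$ contributes a single edge joining the two super-vertices its edges belong to (a loop at $H_i$ if both belong to $H_i$). For $\F$ to traverse every edge of $H$, in particular all edges touching $S$, and for the trails to be closed, a parity/connectivity count on this small multigraph shows that the number of "crossing" vertices of $S$ must be even on each relevant cut; summing, $|S|$ must split as an even number of crossings plus loops, but one then derives that the total $|S|$ forces an even count — contradicting $|S|$ odd. (Alternatively: pair up the two flags at each $v\in S$; a spanning Euler family induces a perfect matching-like pairing on $2|S|$ flags that respects components, and a counting argument on the components forces $|S|$ even.) I would present this as the clean parity obstruction.

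For part~(2) with $|S|=2m$ even, the forward direction extracts from a spanning Euler family $\F$ of $H$ a spanning Euler family of each $S^\ast$-component $H_i^\ast$. Given $\F$, for each component $H_i$ I would restrict each trail of $\F$ to the portion lying inside $V(H_i)\cup S$ (cutting each trail at the points where it crosses through $S$ to another component); the "crossing" passes through vertices of $S$ that I remove get replaced by traversals of the $\frac{|S|}{2}=m$ added copies of the edge $S$ in $H_i^\ast$. The point is that each added copy of $S$ can absorb exactly one crossing pass while still being an edge of cardinality $|S|\ge 2$ that a trail can traverse between two vertices of $S$; since the number of crossing passes at the $H_i$-side equals (by the parity argument of part~(1)) an even number, and in fact one can arrange it to be at most $|S|$, the $m$ copies suffice after possibly consolidating. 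Conversely, given spanning Euler families $\F_i$ of each $H_i^\ast$, I would splice them together: the copies of the edge $S$ in different $H_i^\ast$'s get "glued" — a trail in $\F_1$ traversing a copy of $S$ from vertex $a\in S$ to vertex $b\in S$ is concatenated with a trail in $\F_2$ traversing its copy of $S$ between the same vertices, the two copies of $S$ being identified and deleted. One must check that after all such gluings every edge of $H$ is traversed exactly once (the copies of $S$ disappear, all other edges are edges of exactly one $H_i^\ast$), every vertex of $H$ is an anchor of exactly one resulting trail (vertices in $H_i$ are handled by $\F_i$; vertices of $S$ are anchors in each $\F_i$ but the gluing merges those occurrences into a single trail), and the result is a family of closed trails that cannot be concatenated.

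I expect the main obstacle to be the bookkeeping in part~(2): showing that exactly $\frac{|S|}{2}$ copies of $S$ are the right number — neither too few (so that every crossing in $\F$ can be encoded) nor too many (so that the spliced family has no leftover untraversed copies of $S$) — and that the parity of the number of crossing passes at each component is even and bounded by $|S|$, so that pairing crossings with copies of $S$ works out on both sides simultaneously. The cleanest route is probably to set up, once and for all, a correspondence between spanning Euler families of $H$ and certain systems of closed trails in the disjoint union of the $H_i^\ast$ that agree on the added $S$-edges, and to verify the correspondence is a bijection; the odd case~(1) then falls out as the statement that no such system exists when $|S|$ is odd, because an odd number of flags at $S$ cannot be consistently paired across the cut.
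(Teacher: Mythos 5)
Your overall strategy (a parity-of-crossings argument for (1) and a cut-and-splice argument for (2)) is the same as the paper's, but as written there are genuine gaps. First, you never actually use the minimality of $S$. You cite Lemma~\ref{lem:2.2.2-2.2.4}(2b) only for $k\ge 2$, and your contracted multigraph explicitly allows a vertex $v\in S$ whose two incident edges meet the same component (``a loop at $H_i$''). But with loops permitted the parity count does \emph{not} force $|S|$ even: an odd number of loop vertices plus an even number of crossing vertices is perfectly compatible with a family of closed trails, so no contradiction arises. What makes the argument work is that, by Lemma~\ref{lem:2.2.2-2.2.4}(2a) combined with $\deg_H(v)=2$ and part (1) of that lemma, $H\backslash S$ has exactly two components and \emph{every} $v\in S$ has one incident edge meeting $H_1$ and the other meeting $H_2$ (in particular no edge lies inside $S$ and there are no loop vertices); then every pass through an $S$-vertex switches sides, each closed trail crosses $S$ an even number of times, and since each $v\in S$ is passed exactly once (degree two, spanning, anchor-disjointness), $|S|$ must be even. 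This structural fact is also what both directions of (2) rest on, so it cannot be left as a vague ``tightly constrained'' claim.

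Second, in the forward direction of (2) your bookkeeping is unresolved and slightly misdirected: a spanning Euler family of $H_1^\ast$ must traverse \emph{every} copy of the edge $S$ exactly once, so ``an even number of crossings, at most $|S|$, consolidate if needed'' is not enough; you need the exact count that the $E_2$-segments being replaced number precisely $|S|/2$, which follows because each $v\in S$ is traversed exactly once and each such traversal uses one $E_1$-edge and one $E_2$-edge. Third, and most seriously, your converse gluing rule --- match a copy of $S$ traversed in $\F_1$ between $a,b\in S$ with a copy traversed in $\F_2$ ``between the same vertices'' --- is not available in general: the pairings of the vertices of $S$ induced by the $S$-copies in $\F_1$ and in $\F_2$ need not coincide (e.g.\ with $S=\{a,b,c,d\}$, the family $\F_1$ may pair $a$ with $b$ and $c$ with $d$ while $\F_2$ pairs $a$ with $c$ and $b$ with $d$), so there may be nothing to identify and your splice breaks down. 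The paper's construction avoids this: one first shows, by a flag count at $S$ using that each $v\in S$ is incident in $H_i^\ast$ with exactly one edge of $E_i$, that each trail of $\F_i$ alternates $E_i$-subtrails with single copies of $S$ and that each $v\in S$ is an endpoint of exactly one $E_i$-subtrail; one then deletes all copies of $S$ and concatenates the resulting $E_1$- and $E_2$-subtrails at their common endpoints, which works no matter how the two families paired up the vertices of $S$. Your proposal needs these repairs before it constitutes a proof.
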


\begin{proof}
 From Lemma~\ref{lem:2.2.2-2.2.4}, it follows that $H \backslash S$ has exactly two connected components, $H_1$ and $H_2$, that the sets $E_i=\{ e \in E: e \cap V(H_i) \ne \emptyset \}$, for $i=1,2$, form a partition of $E$, and each vertex in $S$ is incident with exactly one edge from each of $E_1$ and $E_2$.

\begin{enumerate}[(1)]
\item Assume $|S|$ is odd, and suppose $\F$ is a spanning Euler family of $H$. Let $T$ be any closed trail in $\F$ that traverses at least one vertex in $S$. Then $T$ must be of the form $v_0 T_0 v_1 T_1 v_2 \ldots v_{n-1}T_{n-1}v_0$ where $v_0,\ldots,v_{n-1} \in S$ and, for each $i \in \ZZ_n$, $T_i$ is a $(v_i,v_{i+1})$-trail with no internal vertices in $S$. It follows that $E(T_0), E(T_1), \ldots,E(T_{n-1})$ are alternately contained in $E_1$ and $E_2$, whence $n$ must be even. Since the trails in $\F$ are pairwise anchor-disjoint, jointly traverse all vertices in $S$, and each traverses an even number of vertices in $S$, it follows that $|S|$ is even, a contradiction.

\item Assume $|S|$ is even, so that the two $S^\ast$-components $H_1^\ast$ and $H_2^\ast$ are well-defined.

Let $\F$ be a spanning Euler family of $H$. We show that $H_1^\ast$ admits a spanning Euler family (the proof for $H_2^\ast$ is analogous). Let $\F_1$ be the set of closed trails in $\F$ traversing edges in $E_1$. For any $T \in \F_1$, we construct a closed trail $T'$ in $H_1^\ast$ as follows. If $T$ traverses no vertices of $S$, we let $T'=T$. Otherwise, as above, $T$ is of the form $v_0 T_0 v_1 T_1 v_2 \ldots v_{n-1}T_{n-1}v_0$ where $n$ is even, $v_0,\ldots,v_{n-1}$ are the only anchors of $T$ in $S$, and the edge sets of the $(v_i,v_{i+1})$-subtrails $T_i$  are alternately contained in $E_1$ and $E_2$. Obtain $T'$ by replacing each subtrail $T_i$ that traverses edges of $E_2$ with a new copy of the edge $S$ in $H_1^\ast$. Thus $T'$ traverses precisely the same edges in $E_1$ as $T$, together with $\frac{n}{2}$ copies of the edge $S$. Since the trails in $\F_1$ jointly traverse each vertex in $S$ exactly once, the closed trails in $\F_1'=\{ T': T \in \F_1 \}$ jointly traverse each of the $\frac{|S|}{2}$ copies of the edge $S$ in $H_1^\ast$ exactly once, in addition to every edge of $E_1$. It follows that $\F_1'$ is a spanning Euler family of $H_1^\ast$.

Conversely, assume  both $S^\ast$-components admit a spanning Euler family. Fix $i \in \{ 1,2\}$, and let $\F_i$ be a spanning Euler family of $H_i^\ast$. Consider any $T \in \F_i$ that traverses vertices of $S$. Since every vertex $v \in S$ is incident in $H_i^\ast$ with exactly one edge in $E_i$ and $\frac{|S|}{2}$ copies of the edge $S$, it follows that $T$ is of the form $v_0 T_0 v_1 S v_2 \ldots v_{n-2}T_{n-2}v_{n-1}Sv_0$ where $n$ is even, $v_0,\ldots,v_{n-1}$ are the only anchors of $T$ in $S$, and the edge sets of the subtrails $T_0,T_2,\ldots,T_{n-2}$  are all contained in $E_1$. Let $\T_i$ be the family of all subtrails $T_j$ that occur in any closed trail $T \in \F_i$ traversing vertices of $S$, and observe that each vertex $v \in S$ occurs as an endpoint of exactly one subtrail in $\T_i$. It follows that the subtrails in $\T_1 \cup \T_2$ can be suitably concatenated to form a family $\F_S$ of anchor- and edge-disjoint closed trails in $H$. Finally, if we let $\F^\ast$ be the family of all trails in $\F_1 \cup \F_2$ that traverse no vertices of $S$, then $\F_S \cup \F^\ast$ is a spanning Euler family of $H$.
\end{enumerate}
\vspace*{-1.5cm}
\end{proof}

A minor modification to the above proof yields a weaker result for spanning Euler tours.

\begin{cor}\label{cor:deg2}
Let $H$ be a connected hypergraph with a minimal vertex cut $S$ such that $\deg_H(v)=2$ for all $v \in S$. If $H$ admits a spanning Euler tour, then $|S|$ is even and both $S^\ast$-components of $H$ admit a spanning Euler tour.
\end{cor}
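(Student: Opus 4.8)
The plan is to mimic the proof of Theorem~\ref{the:deg2} almost verbatim, strengthening each step so that it respects the single-trail condition. Suppose $H$ admits a spanning Euler tour $T$. In particular $T$ is a spanning Euler family consisting of one closed trail, so by part (1) of Theorem~\ref{the:deg2} we cannot have $|S|$ odd; hence $|S|$ is even and the $S^\ast$-components $H_1^\ast$ and $H_2^\ast$ are well-defined. It then remains to show each $H_i^\ast$ admits a spanning Euler \emph{tour}, i.e.\ that the construction in the forward direction of Theorem~\ref{the:deg2}(2), applied to a single closed trail, yields a single closed trail.

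The key point is connectivity. As in the proof of the theorem, since $E_1,E_2$ partition $E$, the trail $T$ decomposes as $v_0T_0v_1T_1\ldots v_{n-1}T_{n-1}v_0$ with $v_0,\ldots,v_{n-1}\in S$ the anchors of $T$ in $S$, $n$ even, and the subtrails $T_0,T_1,\ldots,T_{n-1}$ alternately using edges of $E_1$ and $E_2$. (Note $T$ must traverse at least one vertex of $S$: otherwise $E(T)\subseteq E_i$ for a single $i$, contradicting that $T$ traverses every edge while $E_1$ and $E_2$ are both nonempty by Lemma~\ref{lem:2.2.2-2.2.4}.) To build $T'$ in $H_1^\ast$, replace each $E_2$-subtrail $T_i$ by a fresh copy of the edge $S$ (traversed as $v_iSv_{i+1}$). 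Then $T'$ is a single closed walk; since the subtrails $T_0,T_2,\ldots$ are pairwise edge-disjoint and the $\tfrac{n}{2}$ copies of $S$ used are distinct copies, $T'$ is in fact a closed trail in $H_1^\ast$. As argued in the theorem, $T'$ traverses exactly the edges of $E_1$ together with all $\tfrac{|S|}{2}$ copies of the edge $S$ (here one uses that $T$, being a single trail, traverses each vertex of $S$ exactly once, so the indices $v_0,\ldots,v_{n-1}$ are distinct and $n=|S|$), and it traverses every vertex of $V(H_1^\ast)=V(H_1)\cup S$. Hence $T'$ is a spanning Euler tour of $H_1^\ast$, and symmetrically for $H_2^\ast$.

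The only real subtlety — and the one place the argument differs from the family case — is verifying that replacing the $E_2$-subtrails by copies of $S$ does not split $T'$ into several components, and that no copy of $S$ is used twice. Both follow from the fact that $T$ is a single closed trail with each vertex of $S$ as an anchor exactly once: the cyclic structure $v_0T_0v_1Sv_2T_2\ldots$ is itself a single cyclic sequence, so $T'$ inherits connectedness directly, and since there are exactly $\tfrac{|S|}{2}=\tfrac{n}{2}$ copies of $S$ available in $H_1^\ast$ and exactly $\tfrac{n}{2}$ of the subtrails $T_i$ lie in $E_2$, the copies can be assigned bijectively. I expect no genuine obstacle here; the corollary is ``weaker'' precisely because the converse fails (the spanning Euler tours of $H_1^\ast$ and $H_2^\ast$ cannot in general be stitched into one spanning Euler tour of $H$, only into a spanning Euler family), so there is nothing to prove in that direction.
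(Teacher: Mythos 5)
Your proposal is correct and follows essentially the same route as the paper, which simply notes that the corollary follows by "a minor modification" of the proof of Theorem~\ref{the:deg2}: apply part (1) to rule out $|S|$ odd, then observe that the forward construction in part (2), applied to the single trail $T$ (which traverses each degree-2 vertex of $S$ exactly once, so $n=|S|$), produces a single closed trail $T'$ in each $S^\ast$-component using the $\frac{|S|}{2}$ distinct copies of the edge $S$ bijectively, hence a spanning Euler tour. Your added checks on edge-distinctness and connectedness of $T'$ are exactly the intended modifications.
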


\begin{figure}[t]
\centerline{\includegraphics[scale=0.7]{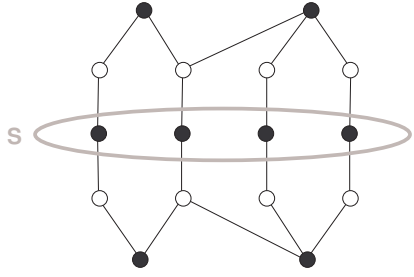}}
\caption{The incidence graph of a hypergraph $H$ with a vertex cut $S$ such that $H$ admits a spanning Euler family and both $S^\ast$-components admit spanning Euler tours, but $H$ does not admit a spanning Euler tour (v-vertices are coloured black). }\label{fig:EG3}
\end{figure}

Note that the converse of Corollary~\ref{cor:deg2} does not hold; an example is shown in Figure~\ref{fig:EG3}.

\subsection{Cut vertices}\label{sec:cut-vx}

We next examine spanning Euler families and tours in connected hypergraphs with vertex cuts of smallest possible cardinality.

\begin{theo}\label{the:cutvx}
Let $H=(V,E)$ be a connected hypergraph with a cut vertex $v$. Let $H_1,\ldots,H_k$ be the connected components of $H \backslash v$, and $H_1',\ldots,H_k'$ the corresponding $\{v \}$-components. Then $H$ has a spanning Euler family  if and only if
\begin{enumerate}[(1)]
\item for some $i \in \{ 1,\ldots,k\}$, the $\{v \}$-component $H_i'$ admits a spanning Euler family , and
\item for each $i \in \{ 1,\ldots,k\}$, at least one of $H_i$ and $H_i'$ admits a spanning Euler family.
\end{enumerate}
\end{theo}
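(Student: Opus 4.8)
The plan is to argue both directions by examining how any spanning Euler family of $H$ interacts with the cut vertex $v$, and conversely how to assemble a spanning Euler family of $H$ from spanning Euler families of the derived hypergraphs. Throughout, write $E_i$ for the set of edges of $H$ that intersect $H_i$; by Lemma~\ref{lem:2.2.2-2.2.4}(1) these sets partition $E$, and every edge incident with $v$ lies in exactly one $E_i$. Note also that the edge set of $H_i'$ is exactly $E_i$, while the edge set of $H_i$ is $\{e\cap V(H_i):e\in E_i\}$, i.e.\ $H_i$ is obtained from $H_i'$ by deleting $v$.

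For the forward direction, suppose $\F$ is a spanning Euler family of $H$. Since $v$ is an anchor of exactly one trail $T\in\F$, I would break $T$ at its occurrences of $v$ into maximal subtrails $T_0,T_1,\dots$ avoiding $v$ internally, each $T_j$ having both endpoints equal to $v$ (a closed trail through $v$) or, more precisely, each $T_j$ being a $(v,v)$-trail whose remaining anchors lie in a single component $H_{i(j)}$ (again by Lemma~\ref{lem:2.2.2-2.2.4}(1)). For each $i$, collect those $T_j$ with $i(j)=i$ together with all trails of $\F$ lying entirely in $H_i$ (these have all edges in $E_i$ and no anchor equal to $v$); this is a spanning Euler family of $H_i'$ when at least one $T_j$ has $i(j)=i$, and otherwise it is a spanning Euler family of $H_i$ (because then $v$ is not needed to span $V(H_i)$). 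Since $v$ is an anchor of $T$, at least one $T_j$ exists, giving (1) for that value of $i$; and the dichotomy above gives (2) for every $i$.

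For the converse, suppose (1) and (2) hold; say $H_{i_0}'$ admits a spanning Euler family $\F_{i_0}$. For each $i\ne i_0$, by (2) pick a spanning Euler family $\G_i$ of whichever of $H_i,H_i'$ admits one, preferring $H_i'$ if both do. Now I would concatenate: in $\F_{i_0}$ and in each $\G_i$ that is a family of $H_i'$, exactly one trail passes through $v$; splice all of these single trails together at $v$ into one closed trail $T$ of $H$, and take the union of $T$ with all remaining (anchor- and edge-disjoint, $v$-avoiding) trails. This yields a set of pairwise edge-disjoint, anchor-disjoint closed trails jointly traversing every edge of $H$ (the $E_i$ partition $E$) and every vertex exactly once: vertices of $H_i$ are covered by $\G_i$ or $\F_{i_0}$, and $v$ is covered exactly once since it lies in $T$ and nowhere else. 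Hence $H$ has a spanning Euler family.

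The main subtlety — and the step I would write out most carefully — is the case analysis in (2) and its use in the converse: when only $H_i$ (not $H_i'$) admits a spanning Euler family, we must check that discarding $v$ from that component causes no harm, which works precisely because $v$ is then spanned through the one component $H_{i_0}'$ guaranteed by (1). One should also confirm that the concatenation at $v$ never forces a repeated anchor flag, i.e.\ that the spliced trail is a genuine (strict) trail; this follows since the trails being spliced are pairwise edge-disjoint and each contributes its own pair of flags at $v$.
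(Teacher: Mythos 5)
Your argument is correct and follows essentially the same route as the paper's: necessity by decomposing the unique trail through $v$ into $(v,v)$-subtrails confined to single $\{v\}$-components, and sufficiency by lifting spanning Euler families of the $H_i$ or $H_i'$ and concatenating the $v$-anchored trails at $v$. The only details to tighten are in the forward direction: when two or more subtrails $T_j$ land in the same component $H_i'$ they all share the anchor $v$, so you must first concatenate them into a single closed trail before the collection is anchor-disjoint (hence a genuine spanning Euler family of $H_i'$), and in the $H_i$ case (and likewise when lifting in the converse) each edge $e$ must be replaced by $e - \{v\}$ (respectively by the corresponding edge $e$ or $e \cup \{v\}$ of $H$), exactly as the paper does.
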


\begin{proof}
Assume $H$ has a spanning Euler family $\F$. We show that (1) and (2) hold.
\begin{enumerate}[(1)]
\item Let $T$ be the unique trail in $\F$ that traverses $v$. Then $T$ is equivalent (see Definition~\ref{def:equiv}) to a concatenation of $(v,v)$-trails $T_1,\ldots,T_k$, where for each index $j$, the trail $T_j$ is contained in the $\{v \}$-component $H_j'$ but may be trivial (of length 0). Since $T$ is non-trivial, there exists an index $i$ such that $T_i$ is non-trivial. Obtain a closed trail $T'$ from $T$ by deleting all $(v,v)$-subtrails not contained in $H_i'$. Let $\F_i$ be the set of all closed trails in $\F -\{ T\}$ that traverse vertices in $H_i'$ (and hence do not traverse $v$). Then $\F_i \cup \{ T'\}$ is a spanning Euler family of $H_i'$.
\item Let $i \in \{ 1,\ldots,k\}$ be such that $H_i'$ does not admit a spanning Euler family, and let $\F_i$ be the set of all closed trails traversing vertices in $H_i$. By the proof of (1), no trail in $\F_i$ traverses $v$, and hence every trail in $\F_i$ traverses only vertices in $H_i$ and edges in $H_i'$. By replacing each  edge $e$ in the trails of $\F_i$ with $e -\{ v \}$, we obtain a spanning Euler family of $H_i$.
\end{enumerate}

Conversely, assume that (1) and (2) hold. Let $\ell \in \{ 1,\ldots,k\}$ be such that $H_{\ell}'$ admits a spanning Euler family $\F_{\ell}$, while for each $i \ne \ell$, let $\F_i$ be a spanning Euler family of either $H_i$ or $H_i'$. For each $i \in \{ 1,\ldots,k\}$, we construct a family $\F_i'$ of closed trails in $H$ as follows.

If $\F_i$ is a spanning Euler family of $H_i'$, then let $\F_i'=\F_i$. Otherwise, obtain $\F_i'$ from $\F_i$ by replacing each edge $e$ in the trails of $\F_i$ with the corresponding edge $e' \in E$, so that either $e'=e$ or $e'= e \cup \{ v\}$. It can then be verified that $\F=\bigcup_{i=1}^k \F_i'$ is a family of edge-disjoint closed trails in $H$ that traverse every vertex and every edge of $H$. By appropriately concatenating the trails in $\F$ we obtain a spanning Euler family of $H$.
\end{proof}

With a slight modification to the above proof we obtain the analogous result for spanning Euler tours.

\begin{cor}\label{cor:cutvx}
Let $H=(V,E)$ be a connected hypergraph with a cut vertex $v$.   Then $H$ has a spanning Euler tour  if and only if every $\{v \}$-component of $H$ admits a spanning Euler tour.
\end{cor}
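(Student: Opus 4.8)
The plan is to adapt the proof of Theorem~\ref{the:cutvx}, with two modifications forced by working with a single closed trail rather than a family: each $\{v\}$-component must now contribute a \emph{non-trivial} $(v,v)$-subtrail, so the disjunction ``at least one of $H_i$, $H_i'$'' in part~(2) tightens to ``$H_i'$'', and the analogue of condition~(1) becomes automatic (if every $H_i'$ has a spanning Euler tour, then certainly some one does). I would first dispose of a degenerate case by assuming $|e|\ge 2$ for all $e\in E$: otherwise some edge has cardinality $1$, in which case $H$ admits no Euler tour and neither does any $\{v\}$-component, so both sides of the asserted equivalence fail. Under this assumption, I would record the bookkeeping that the edge sets $E(H_1'),\ldots,E(H_k')$ partition $E$ --- an edge avoiding $v$ is ``connected'' and hence lies in a unique component of $H\backslash v$, while an edge through $v$ meets at most one component by Lemma~\ref{lem:2.2.2-2.2.4}(1) --- whereas the vertex sets $V(H_1),\ldots,V(H_k)$ partition $V-\{v\}$.

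For the forward direction, given a spanning Euler tour $T$ of $H$, I would note that $T$ traverses $v$ (being spanning), so, exactly as in the proof of Theorem~\ref{the:cutvx}, $T$ is equivalent (Definition~\ref{def:equiv}) to a concatenation $T_1T_2\cdots T_k$ of $(v,v)$-trails with $T_j$ contained in $H_j'$. The added point is that no $T_j$ is trivial: each $u\in V(H_j)$ is an anchor of $T$, hence of some $T_i$, but the anchors of $T_i$ lie in $V(H_i)\cup\{v\}$ and the sets $V(H_i)$ are pairwise disjoint, so $u$ must be an anchor of $T_j$; since $V(H_j)\ne\emptyset$, this makes $T_j$ non-trivial and shows $V_a(T_j)=V(H_j')$. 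Comparing $E(T)=E(T_1)\cup\cdots\cup E(T_k)$ (a disjoint union, as $T$ is a trail) with the partition of $E$ then forces $E(T_j)=E(H_j')$, so each $T_j$ is a spanning Euler tour of $H_j'$. Conversely, given a spanning Euler tour $T_j$ of each $H_j'$, I would observe that $T_j$ traverses $v$ (as $v\in V(H_j')$) and that the $T_j$ are pairwise edge-disjoint and meet only in $v$; hence the concatenation $T=T_1T_2\cdots T_k$ based at $v$ is a closed trail of $H$ traversing every edge of $H$ exactly once and every vertex of $H$, i.e.\ a spanning Euler tour.

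The main obstacle is the same one underlying Theorem~\ref{the:cutvx}: justifying that a closed trail through the cut vertex can be rewritten, up to equivalence (Definition~\ref{def:equiv}), as an ordered concatenation of $(v,v)$-subtrails regrouped by component. Once that is granted, the remaining work is the partition bookkeeping from the first paragraph and the non-triviality observation above, both routine.
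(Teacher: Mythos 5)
Your proposal is correct and follows essentially the same route as the paper, which proves Corollary~\ref{cor:cutvx} precisely by the slight modification of the proof of Theorem~\ref{the:cutvx} that you describe: decompose the spanning Euler tour, up to equivalence, into $(v,v)$-subtrails lying in the $\{v\}$-components, observe that spanning forces each such subtrail to be non-trivial and to exhaust $V(H_i')$ and $E(H_i')$, and conversely concatenate spanning Euler tours of the $\{v\}$-components at $v$. Your degenerate-case remark slightly overstates matters (an edge $\{u\}$ with $u\ne v$ only kills the one $\{v\}$-component containing it, not all of them), but that does not affect the conclusion that both sides of the equivalence fail.
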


By virtue of this result, if a hypergraph $H = (V,E)$ has a cut vertex $v$, the problem of determining whether or not $H$ admits a spanning Euler family (tour) can be reduced to the equivalent problem on the $\lbrace v\rbrace$-components of $H$ and the connected components of $H\backslash v$. This reduction can be applied recursively, and so we need only solve the problem of determining whether particular subhypergraphs of $H$ without cut vertices have a spanning Euler family (tour).

\subsection{Vertex cuts of cardinality two}\label{sec:2-vx-cuts}

We shall now consider the effect of 2-vertex cuts on the existence of spanning Euler families and tours. First, we need to develop some additional concepts.
Recall that for any trail $T$, the anchor flags in $F(T)$ are pairwise distinct. Hence the following definition makes sense.

\begin{defn}{\rm
Let $T$ be a trail in a hypergraph $H$. The {\em incidence graph} of $T$, denoted $\G(T)$,  is the subgraph of $\G(H)$ with vertex set $V_a(T) \cup E(T)$ and edge set $F(T)$. Furthermore, if $\T$ is a family of pairwise edge-disjoint trails in $H$, then we define the {\em incidence graph} of $\T$, denoted $\G(\T)$,  as the (edge-disjoint) union of the incidence graphs of all the trails in $\T$.
}
\end{defn}

\begin{defn}\label{def:equiv}{\rm
Let $\T$ and $\T'$ be two families of pairwise edge-disjoint trails in a hypergraph $H$. We call $\T$ and $\T'$ {\em equivalent}, denoted $\T \equiv \T'$, if $\G(\T)=\G(\T')$.
}
\end{defn}

Clearly, $\equiv$ is an equivalence relation on the set of families of pairwise edge-disjoint trails of $H$. We now take a look at families of cycles, which will be particularly useful in the rest of this paper.

\begin{defn}{\rm
Let $\T$  be a family of pairwise edge-disjoint closed trails in a hypergraph $H$, and $\C$ a family of pairwise edge-disjoint cycles in $H$. Then $\C$ is said to be a {\em cycle decomposition} of $\T$ if  $\T \equiv \C$.

A family of pairwise edge-disjoint cycles that jointly traverse every vertex and every edge of $H$ is called a {\em spanning cycle decomposition} of  $H$.
}
\end{defn}

With the help of \cite[Lemma 3.6]{HGconnection} and Theorem~\ref{thm:SEF-inc-graph-char}, the following observations are then easy to see.

\begin{lemma}\label{lem:CD}
Let $H$ be a hypergraph.
\begin{enumerate}[(1)]
\item Every family of pairwise edge-disjoint closed trails in $H$ admits a cycle decomposition.
\item A family of cycles of $H$ is a spanning cycle decomposition of $H$ if and only if it is a cycle decomposition of a spanning Euler family of $H$.
\item A spanning cycle decomposition $\C$ of $H$ is a cycle decomposition of an Euler tour of $H$ if and only if $\G(\C)$ is connected.
\end{enumerate}
\end{lemma}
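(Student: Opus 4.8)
The plan is to prove Lemma~\ref{lem:CD} by reducing each part to the cited tools: \cite[Lemma 3.6]{HGconnection} (which I will use as the statement that the incidence graph of a closed trail decomposes into cycles, or more precisely that an edge-disjoint union of closed walks in the incidence-graph sense can be re-expressed via cycles) together with Theorem~\ref{thm:SEF-inc-graph-char}. Throughout, the key translation is this: a family $\T$ of pairwise edge-disjoint closed trails in $H$ corresponds, via $\G(\T)$, to a subgraph of $\G(H)$ in which every e-vertex has degree exactly $2$ and every v-vertex has positive even degree; and two families are equivalent precisely when they yield the same such subgraph. So ``finding a cycle decomposition of $\T$'' amounts to ``writing the graph $\G(\T)$ as an edge-disjoint union of graph-cycles, each of which is the incidence graph of a hypergraph-cycle.''

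First I would handle (1). Given pairwise edge-disjoint closed trails $\T$ in $H$, form $G=\G(\T)$. Every e-vertex of $G$ has degree $2$ and every v-vertex has even degree, so $G$ decomposes into edge-disjoint graph-cycles (this is the standard even-degree decomposition, and is exactly the content of \cite[Lemma 3.6]{HGconnection} in the incidence-graph setting; a graph-cycle through an e-vertex enters and leaves via its two incident edges). Each such graph-cycle alternates between v-vertices and e-vertices, visits each e-vertex at most once (degree $2$ forces this), and hence corresponds to a hypergraph-cycle in $H$ — a closed trail with distinct edges and distinct anchors. The resulting family $\C$ of hypergraph-cycles satisfies $\G(\C)=G=\G(\T)$, so $\C\equiv\T$ and $\C$ is a cycle decomposition of $\T$.

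For (2), suppose $\C$ is a spanning cycle decomposition of $H$, i.e.\ a family of pairwise edge-disjoint cycles jointly traversing every vertex and every edge. Then $\G(\C)$ is a subgraph of $\G(H)$ in which every e-vertex has degree $2$, every v-vertex has positive even degree, and every vertex and edge of $H$ appears; by Theorem~\ref{thm:SEF-inc-graph-char} (or rather its proof, which converts such a subgraph into closed trails), $\C$ is equivalent to a spanning Euler family $\F$, so $\C$ is a cycle decomposition of $\F$. Conversely, if $\C$ is a cycle decomposition of a spanning Euler family $\F$, then $\C\equiv\F$ gives $\G(\C)=\G(\F)$, which contains every e-vertex and every v-vertex of $\G(H)$; hence the cycles in $\C$ jointly traverse every edge and vertex, so $\C$ is a spanning cycle decomposition. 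Part (3) is the sharpening of (2): a spanning cycle decomposition $\C$ is a cycle decomposition of an \emph{Euler tour} iff the single closed trail can be chosen with $\G$-image equal to $\G(\C)$, and by the ``single connected component'' clause of Theorem~\ref{thm:SEF-inc-graph-char} this is possible exactly when $\G(\C)$ is connected; I would note that concatenating cycles sharing a common anchor is precisely the operation realised inside a connected $\G(\C)$.

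I expect the only real obstacle to be bookkeeping about what \cite[Lemma 3.6]{HGconnection} says verbatim and ensuring the incidence-graph picture is airtight — specifically, checking that a graph-cycle in $\G(\T)$ does indeed read off as a legitimate hypergraph-cycle (no repeated edges, no two consecutive equal vertices, length $\ge 2$), and that the concatenation/decomposition operations on the hypergraph side exactly mirror adding/removing edge-disjoint graph-cycles on the incidence-graph side. Since e-vertices have degree $2$ in every such subgraph, these checks are routine, which is presumably why the authors call the observations ``easy to see''; the proof is short once the incidence-graph dictionary is set up.
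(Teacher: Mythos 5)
Your proposal is correct and follows essentially the same route as the paper: pass to the incidence graph, decompose the even subgraph $\G(\T)$ into graph-cycles for (1), convert the spanning even subgraph into closed trails component-by-component for (2) (the paper does this directly via Euler tours of the components of $\G(\C)$, which is exactly the construction hiding inside the proof of Theorem~\ref{thm:SEF-inc-graph-char} that you invoke), and use connectedness of $\G(\C)$ versus connectedness of $\G(T)$ for (3). The bookkeeping points you flag (e-vertices of degree $2$, no repeated anchor flags) are precisely the checks the paper treats as routine.
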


\begin{proof}
\begin{enumerate}[(1)]
\item Let $\T$  be any family of pairwise edge-disjoint closed trails in $H$. The incidence graph of $\T$ is an even subgraph of $\G(H)$ and hence admits a cycle decomposition $\C_G$. The family of cycles in $H$ that correspond to the cycles in $\C_G$ forms a cycle decomposition of $\T$.
\item Let $\C$ be a spanning cycle decomposition of $H$. The incidence graph of $\C$ is a spanning even subgraph of $\G(H)$, hence its  connected components $G_1,\ldots,G_k$ admit Euler tours $T_1,\ldots,T_k$, respectively. Let $T_1^H,\ldots,T_k^H$ be the closed trails of $H$ corresponding to $T_1,\ldots,T_k$, respectively. Then $\F=\{ T_1^H,\ldots,T_k^H \}$ is a spanning Euler family of $H$, and $\F \equiv \C$. Hence $\C$ is a cycle decomposition of a spanning Euler family of $H$. The converse follows straight from the definition.

\item If $\G(\C)$ is connected, then in the proof of (2) we have $k=1$, so $T_1$ is a spanning Euler tour of $H$. Conversely, if $\C$ is a cycle decomposition of a spanning Euler tour $T$ of $H$, then $\G(\C)=\G(T)$, so $\G(\C)$ is connected.
\end{enumerate}
\vspace*{-1.5cm}
\end{proof}

We shall now develop a tool that will allow us to classify closed trails in a hypergraph relative to a vertex subset of cardinality 2.

\begin{defn}{\rm
Let $H=(V,E)$ be a hypergraph, and $S \subseteq V$ such that $|S|=2$. Furthermore, let $E_S=\mset{ e \in E: e=S }$.

For a closed trail $T$ in $H$, we define parameters $a(T)$, $b(T)$, and $c(T)$ as follows:
\begin{itemize}
\item $a(T)$ is the number of times vertices in $S$ are traversed by $T$ (that is, the number of vertices in the sequence $T$ that lie in $S$, counting the endpoints of $T$ as one occurrence);
\item $b(T)$ is the number of edges of $E_S$ traversed by $T$; and
\item $c(T)$ is the number of connected components $H_i$ of $H \backslash S$ such that $T$ traverses at least one edge intersecting $H_i$.
\end{itemize}
The triple $(a(T),b(T),c(T))$ is called the {\em $S$-type} of the trail $T$.
}
\end{defn}

Observe that if a trail traverses a vertex in a connected component $H_i$ of $H \backslash S$, then it must also traverse an edge intersecting $H_i$, and recall from Lemma~\ref{lem:2.2.2-2.2.4} that no edge of $H$ intersects more than one connected component of $H \backslash S$.


\begin{figure}
\centerline{\includegraphics[scale=1.00]{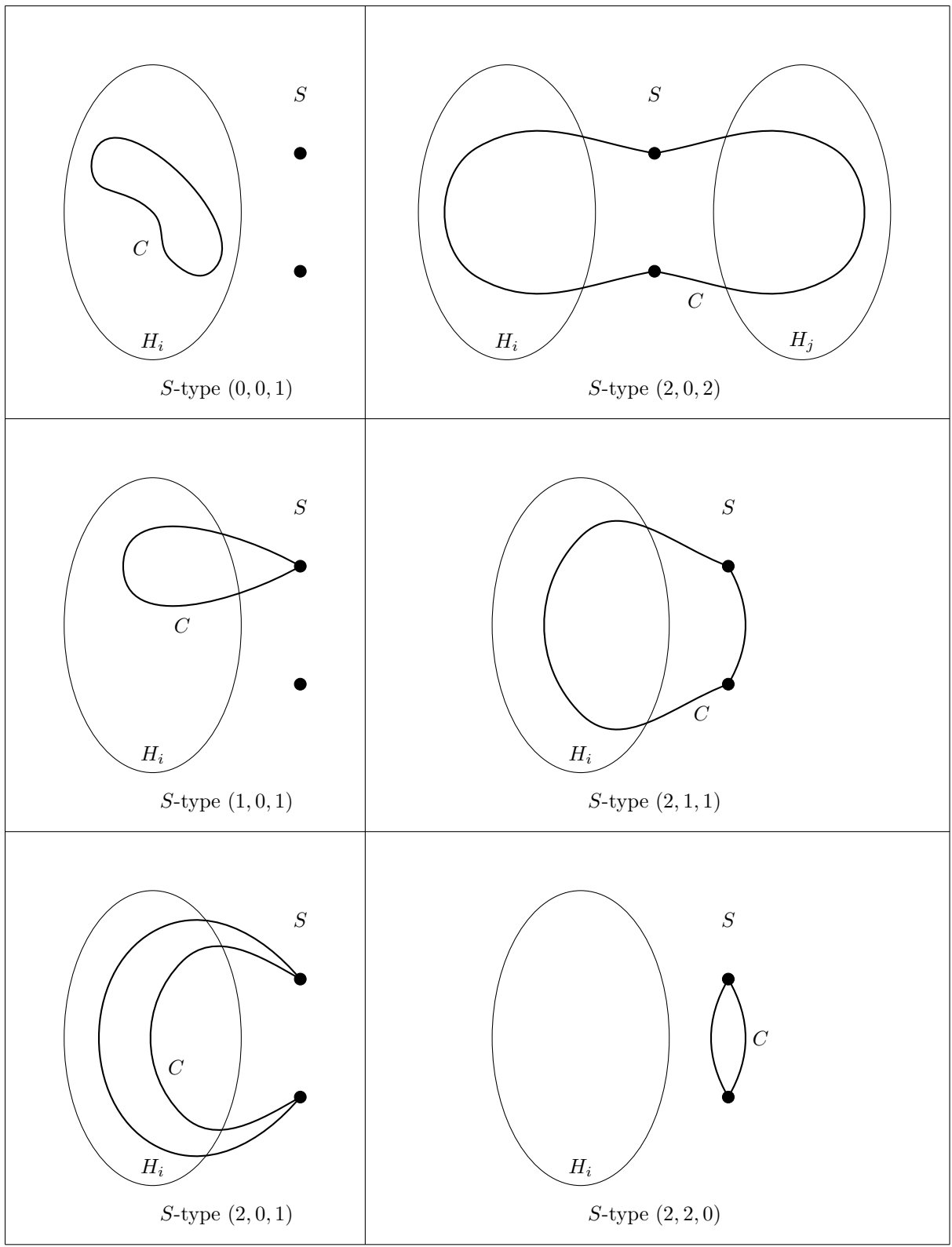}}
\caption{Cycles of different $S$-types. }\label{fig:S-types}
\end{figure}


The following is easy to establish, and cycles of different $S$-types are illustrated in Figure~\ref{fig:S-types}.

\begin{lemma}
Let $H=(V,E)$ be a hypergraph, and $S \subseteq V$ such that $|S|=2$. Furthermore, let $E_S=\mset{ e \in E: e=S }$, and let $C$ be a cycle of $H$. Then the $S$-type of $C$ is in the set $\{ (0,0,1),(1,0,1),(2,0,1),(2,0,2),(2,1,1),(2,2,0)\}$.
\end{lemma}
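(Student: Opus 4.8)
The plan is to enumerate the possible $S$-types of a cycle $C$ directly, by case analysis on the three parameters $a(C)$, $b(C)$, $c(C)$, using the structural facts already established. Recall that $C$ is a closed trail with all anchors distinct (except the coinciding endpoints), and that by Lemma~\ref{lem:2.2.2-2.2.4}(1) no edge of $H$ intersects more than one connected component of $H\backslash S$; moreover every edge of $C$ that is not contained in $S$ intersects exactly one component $H_i$ (it contains a vertex of $V\setminus S$, which lies in some $H_i$), while every edge of $C$ that is contained in $S$ must equal $S$ (since a walk cannot traverse an edge of cardinality less than $2$, and $|S|=2$), hence lies in $E_S$.

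First I would bound $a(C)$. Since $C$ is a cycle, its distinct anchors number at least two and include at most the two vertices of $S$, so $a(C)\in\{0,1,2\}$. If $a(C)=0$, then $C$ traverses no vertex of $S$; every edge of $C$ then contains a vertex outside $S$ and cannot be a subset of $S$, so $b(C)=0$; and all edges of $C$ intersect a single component (if edges of $C$ met two different components $H_i$, $H_j$, then two consecutive edges of $C$ sharing a vertex would force that shared vertex into $H_i\cap H_j=\emptyset$ unless the shared vertex lies in $S$ — impossible since $a(C)=0$), giving type $(0,0,1)$. If $a(C)=1$, say $C$ meets $S$ only in the vertex $s$: again no edge of $C$ can equal $S$ (an edge equal to $S$ contributes both vertices of $S$ as anchors), so $b(C)=0$; and since $C$ is a cycle through $s$ with its two incident edges on $C$ both meeting the same component as one traverses around (the only ``branch point'' back into $S$ would require the second vertex of $S$), we get $c(C)=1$, type $(1,0,1)$.

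The substantive case is $a(C)=2$, i.e. $C$ traverses both vertices $u,v$ of $S$, each exactly once. Then $C$ decomposes at $u$ and $v$ into exactly two $(u,v)$-subtrails $P_1,P_2$, each having no internal anchor in $S$. Each $P_j$ is either a single edge equal to $S$ (contributing to $b(C)$) or a trail whose edges all meet one fixed component $H_{i_j}$ (by the same consecutive-edge argument, since the internal vertices are outside $S$). So each of $P_1,P_2$ is of one of two types: ``an $S$-edge'' or ``a detour through some component $H_i$''. Counting: if both $P_j$ are $S$-edges then $b(C)=2$, $c(C)=0$, type $(2,2,0)$; if exactly one is an $S$-edge then $b(C)=1$ and the other detours through one component, so $c(C)=1$, type $(2,1,1)$; if neither is an $S$-edge then $b(C)=0$ and $P_1,P_2$ detour through components $H_{i_1},H_{i_2}$ which may be equal or not, giving $c(C)=1$ (type $(2,0,1)$) or $c(C)=2$ (type $(2,0,2)$). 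This exhausts all cases and yields exactly the listed set.

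The main obstacle — really the only point needing care — is the repeated claim that within a single subtrail with no internal anchor in $S$, all edges meet the \emph{same} component of $H\backslash S$: this follows because consecutive edges of the subtrail share an anchor, that anchor is internal hence lies in $V\setminus S$ hence in a unique component, and an edge containing a vertex of $H_i$ cannot also contain a vertex of $H_j$ with $j\ne i$ by Lemma~\ref{lem:2.2.2-2.2.4}(1); chaining this along the subtrail propagates a single index. I would state this once as a small observation and invoke it in each case. Everything else is bookkeeping on the values $(a,b,c)$, and one should double-check that no spurious combination like $(2,2,1)$ or $(2,0,3)$ arises — it cannot, since a cycle through both of $u,v$ splits into \emph{exactly} two $(u,v)$-pieces, capping $b(C)+c(C)\le 2$ in the relevant sense.
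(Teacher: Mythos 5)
Your proof is correct: the case analysis on $a(C)\in\{0,1,2\}$, splitting the cycle at $u$ and $v$ into two $(u,v)$-subtrails and using the fact (Lemma~\ref{lem:2.2.2-2.2.4}(1) plus the chaining argument through internal anchors outside $S$) that all edges of such a subtrail meet at most one component of $H\backslash S$, yields exactly the listed six $S$-types. The paper states this lemma without proof (``easy to establish''), and your argument is the intended routine verification; the only point worth noting is that a length-one $(u,v)$-subtrail whose edge strictly contains $S$ is not an $S$-edge yet still meets exactly one component, and your dichotomy does handle this correctly.
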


We say that a cycle {\em requires completion (with respect to $S$)} if it is of $S$-type $(2,0,2)$ or $(2,1,1)$. The following lemma will be our most important tool in the proofs of Theorems~\ref{the:even}--\ref{the:oddSET}.

\begin{lemma}\label{lem:RC}
Let $H=(V,E)$ be a hypergraph, $S \subseteq V$ such that $|S|=2$,  $E_S=\mset{ e \in E: e=S }$, and $H_1,\ldots,H_k$ the connected components of $H \backslash S$. Then any spanning Euler family of $H$ admits a cycle decomposition $\C$ such that:
\begin{enumerate}[(1)]
\item for each $i \in \{1,\ldots,k\}$, at most one cycle in $\{ C \in \C: e \cap V(H_i) \ne \emptyset \mbox{ for some } e \in E(C) \}$ requires completion,
\item if $|E_S|$ is even, no cycle in $\C$ is of type $(2,1,1)$, and
\item if $|E_S|$ is odd, exactly one cycle in $\C$  is of type $(2,1,1)$.
\end{enumerate}
\end{lemma}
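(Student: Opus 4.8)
The plan is to start from an arbitrary spanning Euler family $\F$ of $H$, invoke Lemma~\ref{lem:CD}(1) to fix a cycle decomposition $\C_0$ of $\F$, and then modify $\C_0$ by local surgeries until it satisfies (1)--(3). The key structural observation is that the cycles requiring completion are exactly the ones whose incidence graph touches $S$ ``incompletely'': a cycle of type $(2,0,2)$ enters and leaves $S$ through edges meeting two \emph{different} components of $H\backslash S$, and a cycle of type $(2,1,1)$ uses one copy of the edge $S$ together with edges meeting a \emph{single} component. Since every cycle in $\C_0$ has $S$-type in the list of the preceding lemma, the only cycles contributing a traversal of a vertex of $S$ are those of type $(1,0,1)$, $(2,0,1)$, $(2,0,2)$, $(2,1,1)$, or $(2,2,0)$, and in all of these the total count of $S$-flags used by the whole family is even (it equals $|F(\F)|$ restricted to $S$, which is $\sum_{v\in S}\deg_H(v)$, and more importantly each cycle uses an \emph{even} number of $S$-flags except that type $(1,0,1)$ uses two as well). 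I would record first that each cycle of $\C_0$ uses an even number of flags incident to $S$, so the cycles through $S$ can be thought of as closing up ``$S$-to-$S$ segments''.

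\medskip
The main surgery is the following \emph{merge operation}. Suppose $C$ and $C'$ are two distinct cycles of $\C_0$ that both traverse a vertex $w\in S$ (and are not anchor-disjoint at $w$ only because cycles in a \emph{decomposition} of a single Euler family need not be anchor-disjoint). Writing each as a concatenation of $w$-to-$S$ segments, I can cut $C$ and $C'$ at a common occurrence of $w$ and splice them into a single closed trail, then re-decompose that trail into cycles via Lemma~\ref{lem:CD}(1); this does not change the incidence graph $\G(\C_0)$, hence the new family is still a cycle decomposition of (an Euler family equivalent to) $\F$, and in particular is still spanning. The point of this operation is that combining two cycles, at least one of which requires completion, across a shared vertex of $S$, produces segments that can be re-paired so as to reduce the number of cycles requiring completion. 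Concretely, for part (1): if two cycles $C,C'$ both meet component $H_i$ and both require completion, then each of them has (at $S$) an ``open end into $H_i$'' and an ``open end elsewhere'' (into another $H_j$, or into a copy of the edge $S$); merging and re-decomposing lets me route the two $H_i$-ends to each other, producing a cycle of type $(2,0,1)$ (or $(0,0,1)$ after also absorbing the rest), thereby strictly decreasing $\sum_i (\#\{\text{completion-requiring cycles meeting }H_i\}-1)^+$. Iterating drives that quantity to $0$, which is exactly (1).

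\medskip
For parts (2) and (3) I would argue by a parity/counting bookkeeping on the edges of $E_S$. Each edge of $E_S$ is traversed by exactly one cycle of $\C_0$, and a cycle traversing a copy of $S$ must, by $S$-type analysis, be of type $(2,1,1)$ (one copy of $S$) or $(2,2,0)$ (two copies of $S$, i.e.\ $|E_S|\ge 2$ and the cycle is just $v_1 S v_2 S v_1$). So $|E_S| = (\#\text{type-}(2,1,1)\text{ cycles}) + 2\cdot(\#\text{type-}(2,2,0)\text{ cycles})$, whence $|E_S|$ and the number of type-$(2,1,1)$ cycles have the same parity. If $|E_S|$ is even: I use a merge operation to pair up two type-$(2,1,1)$ cycles. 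Two such cycles each consist of a copy of $S$ plus an $S$-to-$S$ segment inside some component; merging them at a shared vertex of $S$ and re-decomposing, I can instead pair the copy of $S$ from one with the copy of $S$ from the other (forming a type-$(2,2,0)$ cycle) and pair the two interior segments' ends to each other --- but this requires the two interior segments to have their endpoints compatible, which they do since all four endpoints lie in the two-element set $S$. This reduces the count of type-$(2,1,1)$ cycles by $2$; iterating and using the even parity leaves $0$ of them. If $|E_S|$ is odd the same reduction leaves exactly $1$. The one delicate point is making sure these surgeries do not destroy property (1) once achieved; I would run the reductions in order --- first fix (2)/(3) by the $E_S$-pairing, which only ever merges cycles of type $(2,1,1)$ or $(2,2,0)$ and so never increases the number of completion-requiring cycles meeting any $H_i$ (it decreases or fixes it) --- and then fix (1), whose merges produce only cycles of types $(0,0,1),(1,0,1),(2,0,1)$, leaving the $E_S$-count untouched. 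The main obstacle is precisely this interleaving argument: verifying that some order of the surgeries simultaneously achieves all three conclusions, i.e.\ that the ``fix (2)/(3) then fix (1)'' pipeline is monotone in the right invariants. Formalising ``merge two cycles at a common $S$-vertex and re-decompose'' cleanly via incidence graphs (so that Lemma~\ref{lem:CD}(1) does the re-decomposition for free and equivalence with $\F$ is automatic) is the technical heart of the write-up.
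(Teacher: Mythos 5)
Your proposal is essentially the paper's own argument: the same segment-swapping surgeries at the two vertices of $S$, the same $S$-type case analysis, and the same parity count on $E_S$; the paper merely runs the two phases in the opposite order (first forcing at most one completion-requiring cycle per component via an extremal choice minimising $\sum_i RC(\C,i)$, then reducing the number of $(2,1,1)$-cycles while preserving that property) and replaces your potential-function iteration with minimality arguments, which is a cosmetic difference. One small slip worth fixing in a write-up: when the lone $(2,1,1)$-cycle and a $(2,0,2)$-cycle meet the same component, your fix-(1) surgery necessarily recreates a $(2,1,1)$ piece (so it does not produce only types $(0,0,1),(1,0,1),(2,0,1)$, and likewise an unconstrained re-decomposition via Lemma~\ref{lem:CD}(1) could simply return the original pair, so the swapped trails must be specified explicitly), but since the number of $(2,1,1)$-cycles is unchanged, conditions (2)--(3) are still preserved and the argument goes through.
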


\begin{proof}
Let $\F$ be any spanning Euler family of $H$. For any cycle decomposition $\C$ of $\F$ (which exists by Lemma~\ref{lem:CD}) and every $i \in \{1,\ldots,k\}$, we denote $\C_i=\{ C \in \C: e \cap V(H_i) \ne \emptyset \mbox{ for some } e \in E(C) \}$. Furthermore, we define $RC(\C,i)$ as the number of cycles in $\C_i$ that require completion.

Let $\C$ be a cycle decomposition of $\F$ that minimises $\sum_{i=1}^k RC(\C,i)$, and suppose that $RC(\C,j) \ge 2$  for some $j \in \{1,\ldots,k\}$. Let $C,C' \in \C_j$ be two cycles requiring completion.

If $C$ and $C'$ are both of $S$-type (2,1,1), then $\{C,C'\}\equiv \{T,C''\}$ where $T$ is a closed trail of $S$-type (2,0,1) and $C''$ is a cycle of $S$-type (2,2,0). Furthermore, $\{ T \}$ has a cycle decomposition $\C_T$ containing no cycles requiring completion. Replacing the subset $\{C,C'\}$ of $\C$ with $\C_T \cup \{C''\}$, we thus obtain a cycle decomposition $\C'$ of $\F$ such that $RC(\C',j)=RC(\C,j)-2$ and $RC(\C',i)=RC(\C,i)$ for all $i \ne j$, contradicting the choice of $\C$.

If $C$ and $C'$ are both of $S$-type (2,0,2), then $\{C,C'\}\equiv \{T,T'\}$ where $T$ is a closed trail of $S$-type (2,0,1) and $T'$ is a closed trail of $S$-type (2,0,1) or (2,0,2). The first case occurs when $C,C' \in \C_j \cap \C_{\ell}$ for some $\ell \ne j$. Replacing the subset $\{C,C'\}$ of $\C$ with the union of cycle decompositions of $\{T\}$ and $\{T'\}$, we obtain a cycle decomposition $\C'$ of $\F$ such that $RC(\C',i)=RC(\C,i)-2$ for $i \in \{j,\ell\}$ and $RC(\C',i)=RC(\C,i)$ for all $i \not\in \{ j,\ell\}$. The second case occurs when $C \in \C_j \cap \C_{\ell}$ and
$C' \in \C_j \cap \C_{m}$ for $j,\ell,m$ pairwise distinct. We can then construct a cycle decomposition $\C'$ of $\F$ such that $RC(\C',j)=RC(\C,j)-2$ and $RC(\C',i)=RC(\C,i)$ for all $i \ne j$. In both cases, we have a contradiction.

Finally, suppose $C$ is of $S$-type (2,1,1) and $C'$ is of $S$-type (2,0,2). Then $\{C,C'\}\equiv \{T,T'\}$ where $T$ is a closed trail of $S$-type (2,0,1) and $T'$ is a closed trail of $S$-type (2,1,1). If $\ell \ne j$ is such that $C' \in \C_j \cap \C_{\ell}$, then an appropriate replacement results in a cycle decomposition $\C'$ of $\F$ such that $RC(\C',j)=RC(\C,j)-2$ and $RC(\C',i)=RC(\C,i)$ for all $i \ne j$, including $i=\ell$. Again, we have a contradiction with the choice of $\C$.

We conclude that $RC(\C,i) \le 1$ for all $i \in \{1,\ldots,k\}$, so $\C$ satisfies (1).

Now let $\C$ be a cycle decomposition for $\F$ that satisfies (1) and, among such cycle decompositions, also contains the smallest number of cycles of $S$-type (2,1,1).  Observe that the number of cycles in $\C$ of $S$-type (2,1,1) is even if $|E_S|$ is even, and odd otherwise. Hence it remains to show that $\C$ has at most one cycle of $S$-type (2,1,1).

Suppose, to the contrary, that $C_1$, $C_2$ are two cycles in $\C$ of $S$-type (2,1,1), where necessarily $C_1 \in \C_i$ and $C_2 \in \C_j$ for $i \ne j$. Then $\{ C_1,C_2 \} \equiv \{ C_1',C_2'\}$, where $C_1'$ is of $S$-type (2,0,2) and $C_2'$ is of $S$-type (2,2,0).
Replacing the subset $\{C_1,C_2\}$ of $\C$ with $\{C_1',C_2'\}$, we  obtain a cycle decomposition $\C'$ of $\F$ such that $RC(\C',i)=RC(\C,i)$  for all $i$ while $\C'$ contains fewer cycles of $S$-type (2,1,1) --- a contradiction.

Hence $\C$ satisfies Properties (2) and (3) as well.
\end{proof}


We are now ready for our last main result --- the characterisation of hypergraphs with  2-vertex cuts that admit a spanning Euler family (split between Theorems~\ref{the:even} and \ref{the:odd}) or spanning Euler tour (split between Theorems~\ref{the:evenSET} and \ref{the:oddSET}).


\begin{theo}\label{the:even}
Let $H=(V,E)$ be a hypergraph with a  2-vertex cut $S=\{ u,v \}$,  let $E_S=\mset{ e \in E: e=S }$, and assume $|E_S|$ is even. Let $H_1,\ldots,H_k$ be the connected components of $H \backslash S$; $H_1',\ldots,H_k'$ the corresponding $S$-components; and $H_1^\ast,\ldots,H_k^\ast$ the corresponding $S^\ast$-components. Then $H$ admits a spanning Euler family if and only if there exists an even-size  subset $I \subseteq \{ 1,\ldots,k \}$ such that
\begin{enumerate}[(1)]
\item $H_i^\ast$ admits a spanning Euler family for each $i \in I$;
\item for each $i \not\in I$, at least one of $H_i$, $H_i'$,  $H_i' \backslash u$, and $H_i' \backslash v$ admits a spanning Euler family; and
\item if $I =\emptyset$, then at least one of the following holds:
\begin{enumerate}
\item $H_i'$ admits a spanning Euler family for some $i$;
\item $H_i' \backslash u$ and $H_j' \backslash v$ admit spanning Euler families for some $i \ne j$.
\end{enumerate}
\end{enumerate}
\end{theo}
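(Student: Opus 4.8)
My plan is to run everything through Lemma~\ref{lem:RC}, which lets me replace any spanning Euler family by a cycle decomposition that is as close to spanning Euler tours of the derived hypergraphs as possible, and then translate between $H$ and its derived hypergraphs by splicing in or cutting out copies of the edge $S$. Throughout write $H_1,\dots,H_k$ for the connected components of $H\backslash S$. By Lemma~\ref{lem:2.2.2-2.2.4}(1) every edge of $H$ is either a copy of $S$ (i.e.\ lies in $E_S$) or meets exactly one $H_i$, and by Lemma~\ref{lem:evident-nec-cond}(i) no edge has cardinality $\le 1$; hence $E_S\subseteq E(H_i')$ for every $i$, the set $E(H_i')$ is the disjoint union of $E_S$ with the edges of $H$ meeting $H_i$, the hypergraph $H_i^\ast\backslash S=H_i$ is connected, and $|E_S(H_i^\ast)|=|E_S|+1$ is odd. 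I will use Theorem~\ref{thm:SEF-inc-graph-char} and Lemma~\ref{lem:CD} freely to pass between ``a family of pairwise edge-disjoint closed trails covering every edge once and every vertex at least once'' and ``a spanning Euler family''.

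\emph{Necessity.} Suppose $H$ has a spanning Euler family and let $\C$ be a cycle decomposition of it as in Lemma~\ref{lem:RC}. Since $|E_S|$ is even, no cycle of $\C$ has $S$-type $(2,1,1)$, and for each $i$ at most one cycle of $\C$ meeting $H_i$ requires completion; so every cycle requiring completion has $S$-type $(2,0,2)$ and meets exactly two of the $H_i$. Hence the cycles requiring completion induce a partial matching on $\{1,\dots,k\}$, and $I:=\{i:\text{some cycle of }\C\text{ meeting }H_i\text{ requires completion}\}$, being the union of the matched pairs, has even size. For (1), fix $i\in I$, let $C=u\,P_i\,v\,P_j\,u$ be the unique $(2,0,2)$-cycle of $\C$ meeting $H_i$ (with $P_i$, $P_j$ the segments through $H_i$, $H_j$), and replace $P_j$ by a single copy of $S$: the closed trail $u\,P_i\,v\,S\,u$ of $H_i^\ast$, together with the cycles of $\C\setminus\{C\}$ meeting $H_i$ and with $\frac{|E_S|}{2}$ copies of $u\,S\,v\,S\,u$, covers every edge of $H_i^\ast$ once and every vertex at least once. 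For (2), fix $i\notin I$; the cycles of $\C$ meeting $H_i$ then have $S$-type $(0,0,1)$, $(1,0,1)$ or $(2,0,1)$. If none anchors a vertex of $S$, deleting $u,v$ from their edges gives a spanning cycle decomposition of $H_i$; if they anchor both $u$ and $v$, adjoining $\frac{|E_S|}{2}$ copies of $u\,S\,v\,S\,u$ gives one of $H_i'$; and if they anchor $u$ but not $v$ (say), then restricting to $V(H_i)\cup\{u\}$ gives a spanning cycle decomposition of $H_i'\backslash v$ when $E_S=\emptyset$, while for $|E_S|\ge 2$ we get one of $H_i'$ after adjoining copies of $u\,S\,v\,S\,u$ (which supply the missing anchor $v$). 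For (3), assume $I=\emptyset$. If $|E_S|\ge 2$, then for a single $i$ the cycles of $\C$ meeting $H_i$ plus $\frac{|E_S|}{2}$ copies of $u\,S\,v\,S\,u$ give a spanning Euler family of $H_i'$, so 3(a) holds. If $E_S=\emptyset$, then $\C$ has no $(2,0,2)$-, $(2,1,1)$- or $(2,2,0)$-cycle, so both $u$ and $v$ are anchored by cycles of $\C$ meeting some component; if some component's cycles anchor both, 3(a) holds as above, and otherwise the component whose cycles anchor $u$ and the one whose cycles anchor $v$ are distinct and yield spanning Euler families of some $H_i'\backslash v$ and $H_j'\backslash u$, giving 3(b).

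\emph{Sufficiency.} Assume $I$ is of even size and (1)--(3) hold; I assemble a family $\mathcal{D}$ of pairwise edge-disjoint closed trails of $H$. For each $i\in I$, apply Lemma~\ref{lem:RC} to $H_i^\ast$: since $H_i^\ast\backslash S$ is connected and $|E_S(H_i^\ast)|$ is odd, a cycle decomposition of a spanning Euler family of $H_i^\ast$ has exactly one cycle $D_i=u\,Q_i\,v\,S\,u$ of $S$-type $(2,1,1)$ (with $Q_i$ a $(u,v)$-path of $H$ through edges meeting $H_i$) and no $(2,0,2)$-cycle; discard its $(2,2,0)$-cycles, keeping the rest together with $Q_i$. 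Pair the elements of $I$; for a pair $\{i,j\}$ splice $Q_i$ with the reverse of $Q_j$ into the $(2,0,2)$-cycle $u\,Q_i\,v\,\overline{Q_j}\,u$ of $H$ and put it in $\mathcal{D}$. For each $i\notin I$, take a spanning Euler family of one of $H_i,H_i',H_i'\backslash u,H_i'\backslash v$ given by (2) and put its trails into $\mathcal{D}$ (restoring deleted vertices to the edges in the $H_i,H_i'\backslash u,H_i'\backslash v$ cases, and in the $H_i'$ case with $|E_S|\ge 2$ first passing to a Lemma~\ref{lem:RC} cycle decomposition and discarding its $(2,2,0)$-cycles, so that $\mathcal{D}$ does not double-cover $E_S$). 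Finally, if $E_S\ne\emptyset$ and no contribution so far covers $E_S$, adjoin $\frac{|E_S|}{2}$ copies of $u\,S\,v\,S\,u$; and when $I=\emptyset$ and $E_S=\emptyset$, use 3(a) or 3(b) to select the contributions so that $u$ and $v$ are both anchored. A routine verification shows $\mathcal{D}$ covers every edge of $H$ exactly once and every vertex at least once, so by Theorem~\ref{thm:SEF-inc-graph-char} its members can be concatenated into a spanning Euler family of $H$.

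\emph{The main obstacle.} The delicate point is the treatment of the multiset $E_S$: since $E_S\subseteq E(H_i')\cap E(H_i^\ast)$ for \emph{every} $i$, the copies of the edge $S$ must be accounted for exactly once across all the pieces being glued together, and this is precisely what forces the dichotomy between $E_S=\emptyset$ and $|E_S|\ge 2$ (hence the relevance of condition (3) only in the former case) and the parity constraint on $|I|$. A second, smaller difficulty is verifying, in the necessity direction, that the cycles of $\C$ meeting a given $H_i$ anchor $u$ and $v$ in a pattern compatible with precisely one of the four options in (2) (and with (3)); this is dealt with by a case split on the set of vertices of $S$ anchored by those cycles, using that a cardinality-two edge joining $S$ to a component must be traversed through a vertex of $S$.
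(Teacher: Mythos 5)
Your proposal is correct and follows essentially the same route as the paper's proof: a Lemma~\ref{lem:RC} cycle decomposition, the set $I$ of components met by cycles of $S$-type $(2,0,2)$, the same per-component case analysis yielding spanning cycle decompositions of $H_i^\ast$, $H_i'$, $H_i'\backslash u$, $H_i'\backslash v$, or $H_i$, and, conversely, the same surgery of discarding the $(2,2,0)$-cycles, splicing in pairs the $(u,v)$-paths extracted from the unique $(2,1,1)$-cycles of the $H_i^\ast$ decompositions, and re-inserting $\frac{|E_S|}{2}$ two-edge cycles on $S$, finishing via Lemma~\ref{lem:CD}(2). The only cosmetic deviation is your observation that condition (3) carries content only when $E_S=\emptyset$, which is implicit in the paper's argument as well.
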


\begin{proof}
$(\Rightarrow)$ Assume $H$ admits a spanning Euler family $\F$. By Lemma~\ref{lem:RC}, there exists a cycle decomposition $\C$ of $\F$ such that for each $i \in \{ 1,\ldots,k \}$, the set $\C_i=\{ C \in \C: e \cap V(H_i) \ne \emptyset \mbox{ for some } e \in E(C) \}$ contains at most one cycle requiring completion, which is of $S$-type (2,0,2). Let $\C_S=\left\{ C \in \C: E(C)=\{S \} \right\}$ and observe that the cycles in $\C_S$  traverse all edges of $E_S$ (possibly vacuously).
Let $I$ be the set of indices $i \in\{ 1,\ldots,k \}$ such that $\C_i$ has a cycle  of $S$-type (2,0,2), and observe that $|I|$ is even.

Take any $i \in \{ 1,\ldots,k \}$. In each case we construct a spanning cycle decomposition $\C_i'$  of one the hypergraphs $H_i$, $H_i'$, $H_i^\ast$, $H_i' \backslash u$, and $H_i' \backslash v$.

\smallskip

{\em Case (i):} $i \in I$. Then $\C_i$ contains a unique cycle $C_i$ requiring completion, namely, of $S$-type (2,0,2). Let $P_i$ be the unique $(u,v)$-path in $C_i$ whose internal vertices lie in $H_i$. Complete $P_i$ to a cycle $C_i'$ of $S$-type (2,1,1) in $H_i^\ast$ using the additional copy of the edge $S$. Then $\C_i'=(\C_i - \{ C_i \} ) \cup \C_S \cup \{ C_i' \}$ is a spanning cycle decomposition for $H_i^\ast$.

{\em Case (ii):} $i \not\in I$, and $E_S\ne \emptyset$ or $\C_i$ has cycles traversing $u$ and cycles traversing $v$. Then $\C_i'=\C_i \cup \C_S$ is a spanning cycle decomposition for $H_i'$.

{\em Case (iii):} $i \not\in I$, $E_S = \emptyset$, and $\C_i$ has a cycle traversing $u$ but none traversing $v$. Observe that the cycles of $\C_i$ traverse every edge of $H_i'$ and every vertex in $H_i'$ except $v$. Obtain a spanning cycle decomposition $\C_i'$ for $H_i' \setminus v$ by replacing every edge $e$ in every cycle in $\C_i$ with $e - \{ v\}$.

{\em Case (iv):} $i \not\in I$, $E_S = \emptyset$, and $\C_i$ has a cycle traversing $v$ but none traversing $u$. This is analogous to Case (iii).

{\em Case (v):}  $i \not\in I$, $E_S = \emptyset$, and $\C_i$ has no cycles traversing vertices of $S$. Obtain a spanning cycle decomposition $\C_i'$ for $H_i$ by replacing every edge $e$ in every cycle in $\C_i$ with $e - S$.

\smallskip

In each case, we have a spanning cycle decomposition $\C_i'$ of the corresponding hypergraph, so by Lemma~\ref{lem:CD}(2), Statements (1) and (2) follow. Moreover, since $\C$ contains a cycle traversing $u$, there exists an index $i$ such that at least one of Cases (i)-(iii) holds for $i$. And since $\C$ contains a cycle traversing $v$, if Cases (i)-(ii) hold for no index $\ell$, then there exists $j \ne i$ such that Case (iv) holds for index $j$. Hence (3) follows as well.

\medskip

$(\Leftarrow)$ Assume there exists an even-size subset $I \subseteq \{ 1,\ldots,k \}$ such that (1)-(3) hold. If $I \ne\emptyset$, then for each $i \in I$, let $\F_i$ be a spanning Euler family of $H_i^\ast$, and for each $i \not\in I$, let $\F_i$ be a spanning Euler family of one of the hypergraphs $H_i$, $H_i'$,  $H_i' \backslash u$, and $H_i' \backslash v$.

If $I=\emptyset$, then either choose $\ell \in \{1,\ldots,k\}$ such that $H_{\ell}'$ has a spanning Euler family $\F_{\ell}$, or else choose distinct $s,t \in \{1,\ldots,k\}$ such that $H_{s}'\backslash u$ and $H_{t}'\backslash v$ have spanning Euler families $\F_s$ and $\F_t$, respectively. For each $i \ne \ell$ (in the first case) or $i \not\in \{s,t\}$ (in the second case), let $\F_i$ be a spanning Euler family of one of the hypergraphs $H_i$, $H_i'$,  $H_i' \backslash u$, and $H_i' \backslash v$.

Take any $i \in \{ 1,\ldots,k \}$ and let $\C_i$ be a cycle decomposition of $\F_i$. Note that if $\F_i$ is a spanning Euler family of $H_i^\ast$ or $H_i'$, then by Lemma~\ref{lem:RC} we may assume that $\C_i$ contains at most one cycle requiring completion with respect to $S$; namely, a cycle of $S$-type (2,1,1). Let $\C_i^S$ denote the set of cycles of $S$-type (2,2,0) in $\C_i$, and construct a family of cycles $\C_i'$ from $\C_i$ as follows.

\smallskip

{\em Case (i):} $\F_i$ is a spanning Euler family of $H_i^\ast$, that is, $i \in I$. Since $H_i^\ast$ contains an odd number of copies of the edge $S$, we have that $\C_i$ has a unique cycle $C_i$ of $S$-type (2,1,1); this cycle traverses one copy of the edge $S$, while all others are traversed by cycles in $\C_i^S$. Let $\C_i'=\C_i-(\C_i^S \cup \{ C_i \})$. In addition, let $P_i$ be the unique $(u,v)$-path contained in $C_i$.

{\em Case (ii):} $\F_i$ is a spanning Euler family of $H_i'$. Now $H_i'$ contains an even number of copies of the edge $S$, so all of them are traversed by cycles in $\C_i^S$. Let  $\C_i'=\C_i-\C_i^S $.

{\em Case (iii):} $\F_i$ is a spanning Euler family of $H_i' \backslash x$, for $x \in \{u,v\}$. For each edge $e$ of $H_i' \backslash x$, let $e'$ be the corresponding edge of $H_i'$, so that either $e'=e$ or $e'= e \cup \{ x \}$. Obtain $\C_i'$ from $\C_i$ by replacing each edge $e$ in each cycle in $\C_i$ by $e'$.

{\em Case (iv):} $\F_i$ is a spanning Euler family of $H_i$. Obtain $\C_i'$ from $\C_i$ by replacing each edge $e$ in the cycles of $\C_i$ by the corresponding edge $e'\in E$ (so that $e=e'-S$).

\smallskip

Since $|I|$ is even, we can concatenate pairs of paths $P_i$, for $i \in I$, to obtain a family $\C_I$ of $\frac{|I|}{2}$ cycles in $H$. Furthermore, let $\C_S$ be a family of $\frac{|E_S|}{2}$ pairwise edge-disjoint cycles of $S$-type (2,2,0) in $H$. It can then be verified that $\C=\C_I \cup \C_S \cup \bigcup_{i=1}^k \C_i'$ is a spanning cycle decomposition of $H$, so the result follows by Lemma~\ref{lem:CD}(2).
\end{proof}



The analogous result for spanning Euler tours in Theorem~\ref{the:evenSET} below is proved similarly, hence we shall only highlight the differences. For a vertex cut $S$ in a hypergraph $H$, and a connected component $H_i$ of $H \backslash S$, we additionally define the {\em $S^{\ast\ast}$-component $H_i^{\ast\ast}$ of $H$ corresponding to $H_i$} as the hypergraph obtained from the $S$-component $H_i'$ by adjoining two copies of the edge $S$.

\begin{theo}\label{the:evenSET}
Let $H=(V,E)$ be a hypergraph with a  2-vertex cut $S=\{ u,v \}$,  let $E_S=\mset{ e \in E: e=S }$, and assume $|E_S|$ is even. Let $H_1,\ldots,H_k$ be the connected components of $H \backslash S$; $H_1',\ldots,H_k'$ the corresponding $S$-components; $H_1^\ast,\ldots,H_k^\ast$ the corresponding $S^\ast$-components, and $H_1^{\ast\ast},\ldots,H_k^{\ast\ast}$ the corresponding $S^{\ast\ast}$-components. Then $H$ admits a spanning Euler tour if and only if there exists an even-size subset $I \subseteq \{ 1,\ldots,k \}$ such that
\begin{enumerate}[(1)]
\item $H_i^\ast$ admits a spanning Euler tour for each $i \in I$;
\item for each $i \not\in I$, at least one of  $H_i'$ and  $H_i^{\ast\ast}$ admits a spanning Euler tour; and
\item if $I =\emptyset$, then for some $i$, the hypergraph $H_i'$ admits a spanning Euler tour.
\end{enumerate}
\end{theo}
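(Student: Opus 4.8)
The plan is to follow the proof of Theorem~\ref{the:even} closely, but to phrase everything in terms of \emph{spanning cycle decompositions with connected incidence graph}, appealing to Lemma~\ref{lem:CD}(3) instead of Lemma~\ref{lem:CD}(2). The one new ingredient needed at the outset is the observation that every cycle exchange carried out in the proof of Lemma~\ref{lem:RC} replaces a subfamily by an equivalent one (Definition~\ref{def:equiv}) and hence leaves the incidence graph $\G(\C)$ of a cycle decomposition unchanged; therefore Lemma~\ref{lem:RC} may be applied to a spanning cycle decomposition $\C$ of $H$ with $\G(\C)$ connected while preserving connectedness.

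$(\Rightarrow)$ Suppose $H$ has a spanning Euler tour. By Lemma~\ref{lem:CD}(3) pick a spanning cycle decomposition $\C$ of $H$ with $\G(\C)$ connected, normalised via Lemma~\ref{lem:RC} so that, for each $i$, the set $\C_i=\{ C\in\C : e\cap V(H_i)\ne\emptyset \mbox{ for some } e\in E(C) \}$ has at most one cycle requiring completion; as $|E_S|$ is even, such a cycle, if present, has $S$-type $(2,0,2)$ and meets exactly two components. Let $I$ be the set of $i$ with a $(2,0,2)$-cycle in $\C_i$, and $\C_S$ the family of $(2,2,0)$-cycles of $\C$; the $(2,0,2)$-cycles pair up the elements of $I$, so $|I|$ is even. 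Now adapt the case analysis of Theorem~\ref{the:even}. Because the tour is spanning, $\C_i$ meets $S$ for every $i$ that contributes anything (otherwise $\G(\C_i)$ would be a separate component of $\G(\C)$), so the cases producing $H_i$ do not occur; and a chunk $\C_i$ meeting only $u$, or only $v$, is handled by adjoining a fresh $(2,2,0)$-cycle on two new copies of $S$, which supplies the missing vertex of $S$ and, if necessary, reconnects $\C_i$, giving a spanning cycle decomposition of $H_i^{\ast\ast}$ in place of $H_i'\backslash v$ or $H_i'\backslash u$. The remaining outcomes are a spanning cycle decomposition of $H_i^{\ast}$ for $i\in I$ (complete the $(u,v)$-path through $H_i$ inside the $(2,0,2)$-cycle with the extra copy of $S$, then add $\C_S$) and of $H_i'$ for those $i\notin I$ for which $\C_i\cup\C_S$ is a connected spanning cycle decomposition of $H_i'$ (automatic when $E_S\ne\emptyset$). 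That each derived decomposition has connected incidence graph follows from connectedness of $\G(\C)$: a component of $\G(\C_i)$ can attach to the rest of $\G(\C)$ only at the v-vertices $u$ and $v$, since distinct cycles of $\C$ share no e-vertex and any cycle meeting $V(H_i)$ lies in $\C_i$. The same observation shows that if $I=\emptyset$ then some chunk must itself connect $u$ to $v$, so $H_i'$ has a spanning Euler tour; this is Condition~(3). Hence (1)--(3) hold by Lemma~\ref{lem:CD}(3).

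$(\Leftarrow)$ Given an even-size $I$ with (1)--(3), choose (Lemma~\ref{lem:CD}(3)) connected spanning cycle decompositions of the derived hypergraphs named in (1)--(3), each normalised via Lemma~\ref{lem:RC}. For $i\in I$ the decomposition of $H_i^{\ast}$ contains, since $|E_S|+1$ is odd, a unique $(2,1,1)$-cycle; delete its copy of $S$ to obtain a $(u,v)$-path $P_i$ through $H_i$, and discard all copies of $S$. For $i\notin I$ discard from the chosen decomposition of $H_i'$ or $H_i^{\ast\ast}$ all copies of $S$ along with the $(2,2,0)$-cycles carrying them. Since $|I|$ is even, concatenate the $P_i$ in pairs into $(2,0,2)$-cycles of $H$, and finally adjoin $\frac{|E_S|}{2}$ pairwise edge-disjoint $(2,2,0)$-cycles on the genuine copies of $S$ in $H$. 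As in Theorem~\ref{the:even}, routine bookkeeping shows the resulting family $\C$ covers every vertex and edge of $H$ exactly once; the delicate point is that $\G(\C)$ is connected, which holds because every component of every trimmed chunk still meets $\{u,v\}$ and $\C$ contains a $u$--$v$ bridge --- supplied by a new $(2,0,2)$-cycle if $I\ne\emptyset$, by a new $(2,2,0)$-cycle if $|E_S|\ge 2$, and otherwise by the distinguished chunk of Condition~(3), which has no copies of $S$ to discard and so remains a connected decomposition through both $u$ and $v$. Lemma~\ref{lem:CD}(3) then produces the spanning Euler tour.

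The step I expect to be the main obstacle is precisely the connectedness bookkeeping in both directions: one must keep track of how the ``component chunks'' of $\G(\C)$ are attached to the two v-vertices $u$ and $v$, and verify that discarding and re-introducing copies of the edge $S$ neither detaches a chunk nor destroys the single $u$--$v$ connection on which connectedness of $\G(\C)$ depends. The rest --- the reduction via Lemma~\ref{lem:RC}, the parity of $|I|$, the identification of which derived hypergraph each chunk witnesses, and the counting of vertices and edges --- parallels the proof of Theorem~\ref{the:even}.
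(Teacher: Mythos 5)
Your proposal is correct and takes essentially the same route as the paper's proof: normalise a cycle decomposition of the tour via Lemma~\ref{lem:RC}, rerun the case analysis of Theorem~\ref{the:even} with $H_i^{\ast\ast}$ (via an adjoined $(2,2,0)$-cycle) replacing $H_i$, $H_i'\backslash u$ and $H_i'\backslash v$, and track connectedness of incidence graphs through Lemma~\ref{lem:CD}(3), with the $u$--$v$ bridge in the converse supplied by $\C_I$, $\C_S$, or the distinguished chunk from Condition~(3). The only nitpick is that your forward case split should also send to $H_i^{\ast\ast}$ those chunks with $E_S=\emptyset$ that meet both $u$ and $v$ but have disconnected incidence graph (the paper's Case~(iii) covers all $i\notin I$ with $E_S=\emptyset$); the fresh $(2,2,0)$-cycle reconnects them exactly as you already note for the chunks meeting only one vertex of $S$.
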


\begin{proof}
$(\Rightarrow)$ Assume $H$ admits a spanning Euler tour $T$, and let $\F=\{ T \}$. Define $\C$, $\C_i$ (for $i \in\{ 1,\ldots,k \}$), $\C_S$, and $I$ as in the proof of Theorem~\ref{the:even}.
For each $i \in \{ 1,\ldots,k \}$, construct a spanning cycle decomposition $\C_i'$  of one the hypergraphs $H_i'$, $H_i^\ast$, and  $H_i^{\ast\ast}$ as follows.

\smallskip

{\em Case (i):} $i \in I$. This is identical to Case (i) of the proof of Theorem~\ref{the:even}.

{\em Case (ii):} $i \not\in I$ and $E_S\ne \emptyset$. This is identical to Case (ii) of the proof of Theorem~\ref{the:even}.

{\em Case (iii):} $i \not\in I$ and $E_S = \emptyset$. Observe that, since the incidence graph $\G(\C)$ of $\C$ is connected, the cycles in $\C_i$ jointly traverse at least one vertex in $S$. Let $C_S$ be a cycle in $H_i^{\ast\ast}$ of $S$-type (2,2,0). Then $\C_i'=\C_i \cup \{ C_S\}$ is a spanning cycle decomposition for $H_i^{\ast\ast}$.

\smallskip

Since in each case  $\G(\C_i')$ contains a $(u,v)$-path, it is connected. Hence by Lemma~\ref{lem:CD}(3),  $\C_i'$ is a cycle decomposition of a spanning Euler tour of the corresponding  hypergraph, and (1)-(2) follow. Suppose $I =\emptyset$. If $E_S\ne \emptyset$, then clearly $\C_i'$ is a spanning cycle decomposition for $H_i'$ for each $i$. If $E_S = \emptyset$, then $T$ must contain, for some $i$, a $(u,v)$-subtrail that traverses only edges in $H_i'$. Consequently, $\G(\C_i)$ is connected, and hence $\C_i$ itself is a cycle decomposition of a spanning Euler tour of $H_i'$. Thus (3) follows as well.

\medskip

$(\Leftarrow)$ Assume there exists an even-size subset $I \subseteq \{ 1,\ldots,k \}$ such that (1)-(3) hold. If $I\ne \emptyset$, then for each $i\in I$, let $T_i$ be a spanning Euler tour of $H_i^\ast$, and for each $i\not\in I$, let $T_i$ be a spanning Euler tour of one of the hypergraphs $H_i'$ and $H_i^{\ast\ast}$. If $I = \emptyset$, let $\ell$ be such that $H_{\ell}'$ admits a spanning Euler tour $T_{\ell}$ and for each $i \ne \ell$, let $T_i$ be a spanning Euler tour of one of the hypergraphs $H_i'$ and $H_i^{\ast\ast}$.

Take any $i \in \{ 1,\ldots,k \}$, let $\C_i$ be a cycle decomposition of $\{ T_i \}$ containing at most one cycle requiring completion, and proceed as in the proof of Theorem~\ref{the:even}.

\smallskip

{\em Case (i):} $T_i$ is a spanning Euler tour of $H_i^\ast$. This is identical to Case (i) of the proof of Theorem~\ref{the:even}.

{\em Case (ii):} $T_i$ is a spanning Euler tour of $H_i'$ or $H_i^{\ast\ast}$. Now the hypergraph contains an even number of copies of the edge $S$, so all of them are traversed by cycles in $\C_i^S$. Let  $\C_i'=\C_i-\C_i^S $.

\smallskip

Construct a spanning cycle decomposition $\C=\C_I \cup \C_S \cup \bigcup_{i=1}^k \C_i'$ of $H$ as before. Note that each $\G(\C_i')$ has at most two connected components (one containing $u$ and one containing $v$). If $I\ne\emptyset$, then $\C_I$ is non empty and it follows that $\G(\C)$ is connected. If $I = \emptyset$, then $\G(\C_{\ell})$ is connected, contains both of $u$ and $v$, and is a subgraph of $\G(\C)$, whence it follows that  $\G(\C)$ is connected. Hence, by Lemma~\ref{lem:CD}(3), $\C$ is a cycle decomposition of a spanning Euler tour of $H$.
\end{proof}


It remains to state and prove results analogous to Theorems~\ref{the:even} and \ref{the:evenSET} for $|E_S|$ odd.

\begin{theo}\label{the:odd}
Let $H=(V,E)$ be a hypergraph with a  2-vertex cut $S=\{ u,v \}$,  let $E_S=\mset{ e \in E: e=S }$, and assume $|E_S|$ is odd. Let $H_1,\ldots,H_k$ be the connected components of $H \backslash S$; $H_1',\ldots,H_k'$ the corresponding $S$-components; and $H_1^\ast,\ldots,H_k^\ast$ the corresponding $S^\ast$-components. Then $H$ admits a spanning Euler family if and only if there exists an odd-size  subset $J \subseteq \{ 1,\ldots,k \}$ such that
\begin{enumerate}[(1)]
\item $H_i'$ admits a spanning Euler family for each $i \in J$; and
\item for each $i \not\in J$, at least one of $H_i$ and $H_i^\ast$ admits a spanning Euler family.
\end{enumerate}
\end{theo}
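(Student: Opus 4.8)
The plan is to mirror the structure of the proof of Theorem~\ref{the:even}, using Lemma~\ref{lem:RC} with the understanding that, since $|E_S|$ is odd, \emph{exactly one} cycle in the chosen decomposition is of $S$-type $(2,1,1)$, and all remaining copies of the edge $S$ are traversed by cycles of type $(2,2,0)$. For the forward direction, suppose $H$ admits a spanning Euler family $\F$. By Lemma~\ref{lem:RC} there is a cycle decomposition $\C$ of $\F$ in which each $\C_i$ contains at most one cycle requiring completion, and exactly one cycle $C_0\in\C$ is of type $(2,1,1)$. Let $j_0$ be the index with $C_0\in\C_{j_0}$; then $\C_{j_0}$ contains no type-$(2,0,2)$ cycle, since $C_0$ already uses up the allowed ``requires completion'' slot. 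Let $J$ be $\{j_0\}$ together with the indices $i$ for which $\C_i$ contains a type-$(2,0,2)$ cycle. Since type-$(2,0,2)$ cycles pair up indices two at a time (each such cycle lies in exactly two of the $\C_i$, as in Case~(i)/(ii) of the proof of Theorem~\ref{the:even}), the number of such indices is even, so $|J|$ is odd.

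For $i\in J\setminus\{j_0\}$ (so $\C_i$ has a type-$(2,0,2)$ cycle $C_i$): extract the unique $(u,v)$-path $P_i$ in $C_i$ with internal vertices in $H_i$, complete it with one extra copy of $S$ to a type-$(2,1,1)$ cycle in $H_i^\ast$, and set $\C_i'=(\C_i-\{C_i\})\cup\{\text{completed cycle}\}$ together with the other copies of $S$; since $H_i^\ast$ has $|E_S|+1$ copies of $S$, which is even, these are distributed as one type-$(2,1,1)$ cycle plus type-$(2,2,0)$ cycles — but here we only need that $\C_i'$ is a spanning cycle decomposition of $H_i^\ast$. For $i=j_0$: the cycle $C_0$ together with the type-$(2,2,0)$ cycles of $\C$ restricted to $H_{j_0}'$ and the rest of $\C_{j_0}$ gives a spanning cycle decomposition of $H_{j_0}'$ directly, because $H_{j_0}'$ contains exactly $|E_S|$ copies of $S$ and $C_0$ uses one while the type-$(2,2,0)$ cycles use the rest. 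For $i\notin J$: $\C_i$ has no cycle requiring completion; if $\C_i$ has a cycle traversing a vertex of $S$ (equivalently, traversing both $u$ and $v$, since the only $S$-types present with $a=2$ in $\C_i$ are $(2,0,1)$, $(2,1,1)$, $(2,2,0)$, the latter two excluded) it yields a spanning cycle decomposition of $H_i'$; if not, replacing each edge $e$ by $e-S$ yields one of $H_i$. Actually one must be slightly careful: an $i\notin J$ could have $\C_i$ containing a $(2,1,1)$ cycle — but $C_0$ is the \emph{only} $(2,1,1)$ cycle and $C_0\in\C_{j_0}$ with $j_0\in J$, so this does not happen. Thus (1) and (2) hold.

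For the converse, suppose $J$ has odd size with (1)–(2). For $i\in J$ take a spanning Euler family $\F_i$ of $H_i'$; by Lemma~\ref{lem:RC} choose a cycle decomposition $\C_i$ with exactly one cycle $C_i$ of type $(2,1,1)$ (possible since $H_i'$ has $|E_S|$ copies of $S$, odd) and let $P_i$ be its $(u,v)$-path; remove $C_i$ and the type-$(2,2,0)$ cycles, calling the remainder $\C_i'$. For $i\notin J$ take a spanning Euler family of $H_i$ or $H_i^\ast$: in the $H_i$ case, re-inflate edges $e\mapsto e\cup(\text{appropriate subset of }S)$ to get cycles of $H_i'$ of type $(0,0,1)$, $(1,0,1)$ or $(2,0,1)$ — wait, re-inflation here means mapping each edge $e$ of $H_i$ to the corresponding edge $e'\in E$ with $e=e'-S$, giving a spanning cycle decomposition $\C_i'$ of the edges of $H_i'$ coming from $E_i$; in the $H_i^\ast$ case, $H_i^\ast$ has $|E_S|+1$ (even) copies of $S$, so by Lemma~\ref{lem:RC} a cycle decomposition has no $(2,1,1)$ cycle and at most one $(2,0,2)$ cycle $C_i$ — extract its $(u,v)$-path $P_i$, discard $C_i$ and the type-$(2,2,0)$ cycles to get $\C_i'$. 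Now there is one path $P_i$ for each $i\in J$ plus one for each $i\notin J$ whose family used $H_i^\ast$; since $|J|$ is odd and the ``$H_i^\ast$-contributions'' come in as extra paths, we need the total number of $(u,v)$-paths to be such that we can complete them using $|E_S|$ copies of $S$ in $H$. The key point: we have $|E_S|$ copies of $S$ available in $H$; closing up a collection of $t$ disjoint $(u,v)$-paths into closed trails uses up $S$-edges in a way that must exactly account for these copies together with standalone type-$(2,2,0)$ cycles. Writing $t$ for the number of paths $P_i$ (one per $i\in J$, and one per $S^\ast$-index outside $J$), $t$ has the same parity as $|J|$, hence is odd; string the $t$ paths and $|E_S|$ copies of $S$ alternately around to form closed trails (a cycle using $2$ copies of $S$ for each "pair" of paths, with the odd leftover path closed by the one remaining copy of $S$ — valid since $|E_S|$ and $t$ are both odd, and $|E_S|\ge t$ may fail, so instead: form one big closed trail alternating path, $S$, path, $S$, \dots around all $t$ paths using $t$ copies of $S$, then $|E_S|-t$ copies of $S$, which is even, decompose into $\tfrac{|E_S|-t}{2}$ type-$(2,2,0)$ cycles). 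The union of this with all $\C_i'$ is checked to be a spanning cycle decomposition of $H$, and the result follows from Lemma~\ref{lem:CD}(2).

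\textbf{Main obstacle.} The delicate point, and the one I expect to require the most care, is the parity bookkeeping for the copies of the edge $S$: one must verify that in \emph{every} case distribution of the indices — in particular when many indices outside $J$ contribute an extra $(u,v)$-path via an $S^\ast$-component — the total number of open $(u,v)$-paths has the right parity to be closed up using exactly the $|E_S|$ copies of $S$ in $H$ (with the leftover, necessarily even, going into type-$(2,2,0)$ cycles), and that this can be done with \emph{pairwise edge-disjoint} closed trails. The asymmetry between Theorem~\ref{the:even} (even $|E_S|$, even $|I|$, paths pair up perfectly) and this theorem (odd $|E_S|$, odd $|J|$, one leftover path matched to the one leftover $S$-edge) is exactly what forces the ``odd-size $J$'' condition, and making that correspondence airtight — especially checking that the single mandatory type-$(2,1,1)$ cycle of Lemma~\ref{lem:RC} lands in a $J$-indexed component in the forward direction, and that it can be produced there in the converse — is where the real work lies. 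Everything else (re-inflation of edges, Case~(iii)-type vertex-deletion arguments, invoking Lemma~\ref{lem:CD}) is routine and parallels the proof of Theorem~\ref{the:even}.
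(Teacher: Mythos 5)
Your overall strategy (Lemma~\ref{lem:RC}, cycle decompositions, a per-component case analysis, reassembly via Lemma~\ref{lem:CD}(2)) is the same as the paper's, but your case analysis carries over the even-$|E_S|$ bookkeeping of Theorem~\ref{the:even} to a setting where it must be swapped, and this breaks both implications. In the forward direction, for $i\in J\setminus\{j_0\}$ you complete $P_i$ inside $H_i^\ast$; but after the completed $(2,1,1)$ cycle uses one copy of $S$, there remain $|E_S|$ copies in $H_i^\ast$, an odd number, which cannot be covered by $(2,2,0)$ cycles (each uses two copies) --- indeed any spanning cycle decomposition of $H_i^\ast$ contains an even number of $(2,1,1)$ cycles, while yours would contain exactly one. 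Moreover, even if $H_i^\ast$ did admit a spanning Euler family, statement (1) asks for $H_i'$ for every $i\in J$. The correct move (the paper's Case~(i)) is to close $P_i$ inside $H_i'$ using the one copy of $S$ not traversed by $\C_S$, so that $(\C_i-\{C_i\})\cup\C_S\cup\{C_i'\}$ spans $H_i'$. Dually, for $i\notin J$ whose $\C_i$ meets $S$, your claim that $\C_i$ yields a spanning cycle decomposition of $H_i'$ fails: $\C_i$ traverses no copy of $S$, $H_i'$ contains an odd number ($\ge 1$) of them, and they cannot be absorbed by $(2,2,0)$ cycles alone; also $\C_i$ may cover only one of $u,v$ via a $(1,0,1)$ cycle, a type your parenthetical ``equivalently, traversing both $u$ and $v$'' overlooks. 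The right target there is $H_i^\ast$, whose $|E_S|+1$ (even) copies are exactly covered by $\frac{|E_S|+1}{2}$ cycles of type $(2,2,0)$, which also pick up whichever of $u,v$ is missing --- and that is what statement (2) requires.

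In the converse there are two further problems. First, no $(u,v)$-paths ever arise from the $H_i^\ast$-indices: deleting $S$ from $H_i^\ast$ leaves the single component $H_i$, so $S$-type $(2,0,2)$ is impossible there, and with an even number of copies of $S$ Lemma~\ref{lem:RC} gives a decomposition in which all copies lie in $(2,2,0)$ cycles; hence $t=|J|$, and your parity claim about the ``extra paths'' is both unjustified as stated and vacuous. Second, and decisively, your closing-up scheme --- whether pairing each path with its own copy of $S$ or the ``big alternating closed trail'' --- consumes one copy of $S$ per path and so needs $|E_S|\ge|J|$, which you yourself note may fail (e.g.\ $|E_S|=1$, $|J|=3$); your proposed fix has the same requirement, and the big closed trail is not a cycle, so it cannot appear in a spanning cycle decomposition without being re-decomposed (which returns you to one copy of $S$ per path). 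The paper avoids this entirely: concatenate the paths $P_i$, $i\in J-\{\ell\}$, in pairs at $u$ and $v$ using no copies of $S$, close only $P_\ell$ with a single copy of $S$, and put the remaining $|E_S|-1$ (even) copies into $(2,2,0)$ cycles. With these corrections your argument becomes the paper's proof; as written, both directions have genuine gaps.
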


\begin{proof}
$(\Rightarrow)$ Assume $H$ admits a spanning Euler family $\F$. By Lemma~\ref{lem:RC}, there exists a cycle decomposition $\C$ of $\F$ such that for each $i \in \{ 1,\ldots,k \}$, the set
$\C_i=\{ C \in \C: e \cap V(H_i) \ne \emptyset \mbox{ for some } e \in E(C) \}$ contains at most one cycle requiring completion,  and $\C$ contains exactly one cycle of $S$-type (2,1,1). Let $\C_S=\left\{C \in \C: E(C)=\{ S \} \right\}$ and observe that the cycles in $\C_S$ traverse all but one of the edges of $E_S$.

Let $I$ be the set of indices $i \in\{ 1,\ldots,k \}$ such that $\C_i$ has a cycle  of $S$-type (2,0,2), and observe that $|I|$ is even. Furthermore, let $\ell$ be the unique index such that $\C_{\ell}$ has a cycle  of $S$-type (2,1,1), and let $J=I \cup \{ \ell \}$.

Take any $i \in \{ 1,\ldots,k \}$. In each case we construct a spanning cycle decomposition $\C_i'$ of  one of the hypergraphs $H_i$, $H_i'$, and $H_i^\ast$.

\smallskip

{\em Case (i):} $i \in I$. Then $\C_i$ contains a unique cycle $C_i$ requiring completion, namely, of $S$-type (2,0,2). Let $P_i$ be the unique $(u,v)$-path in $C_i$ whose internal vertices lie in $H_i$. Complete $P_i$ to a cycle $C_i'$ of $S$-type (2,1,1) in $H_i'$ using the copy of the edge $S$ not traversed by any cycle in $\C_S$. Then $\C_i'=(\C_i - \{ C_i \} ) \cup \C_S \cup \{ C_i' \}$ is a spanning cycle decomposition for $H_i'$.

{\em Case (ii):} $i=\ell$. Then $\C_i$ contains a unique cycle $C_i$ requiring completion, namely, of $S$-type (2,1,1). It follows $\C_i'=\C_i  \cup \C_S $ is a spanning cycle decomposition for $H_i'$.

{\em Case (iii):} $i \not\in J$ and $\C_i$ has a cycle traversing some vertex in $S$. Note that none of the cycles in $\C_i$ traverse the edges of $E_S$. Let $\C_S'$ be a family of $\frac{|E_S|+1}{2}$ pairwise edge-disjoint cycles of $S$-type (2,2,0) in $H_i^\ast$. Then $\C_i'=\C_i \cup \C_S'$ is a spanning cycle decomposition for $H_i^\ast$.

{\em Case (iv):}  $i \not\in J$ and $\C_i$ has no cycles traversing vertices of $S$. Obtain a spanning cycle decomposition $\C_i'$ for $H_i$ by replacing every edge $e$ in every cycle in $\C_i$ with $e - S$.

\smallskip

By Lemma~\ref{lem:CD}(2), each $\C_i'$ is a cycle decomposition of a spanning Euler family for the corresponding hypergraph. Hence (1) and (2) follow.

\medskip

$(\Leftarrow)$ Assume there exists an odd-size subset $J \subseteq \{ 1,\ldots,k \}$ such that (1) and (2) hold. For each $i \in J$, let $\F_i$ be a spanning Euler family of $H_i'$, and for each $i \not\in J$, let $\F_i$ be a spanning Euler family of one of the hypergraphs $H_i$ and $H_i^\ast$.

Take any $i \in \{ 1,\ldots,k \}$ and let $\C_i$ be a cycle decomposition of $\F_i$. Note that, since $|E_S|$ is odd, if $\F_i$ is a spanning Euler family of $H_i^\ast$, then we may assume by Lemma~\ref{lem:RC}  that $\C_i$ contains no cycles requiring completion with respect to $S$, and if $\F_i$ is a spanning Euler family of $H_i'$, then $\C_i$ contains exactly one cycle requiring completion; namely, a cycle of $S$-type (2,1,1). Let $\C_i^S$ denote the set of cycles of $S$-type (2,2,0) in $\C_i$, and construct a family of cycles $\C_i'$ from $\C_i$ as follows.

{\em Case (i):} $\F_i$ is a spanning Euler family of $H_i^\ast$, so $i \not\in J$. Note that all $|E_S|+1$ copies of the edge $S$ are traversed by the cycles of $S$-type (2,2,0) in $\C_i^S$.  Let $\C_i'=\C_i-\C_i^S$.

{\em Case (ii):} $\F_i$ is a spanning Euler family of $H_i'$, so $i \in J$. Let $C_i$ be the unique cycle of $S$-type (2,1,1) in $\C_i$, and $P_i$  the unique $(u,v)$-path contained in $C_i$. Let $\C_i'=\C_i-(\C_i^S \cup \{ C_i \})$.

{\em Case (iii):} $\F_i$ is a spanning Euler family of $H_i$. Obtain $\C_i'$ from $\C_i$ by replacing each edge $e$ in the cycles of $\C_i$ by the corresponding edge $e'\in E$ (so that $e=e'-S$).

Choose an index $\ell \in J$ and let $I=J-\{ \ell \}$.
Since $|I|$ is even, we can concatenate pairs of paths $P_i$, for $i \in I$, to obtain a family $\C_I$ of $\frac{|I|}{2}$ cycles in $H$. Complete $P_{\ell}$ to a cycle $C_{\ell}$ by adjoining one copy of the edge $S$, and let  $\C_S$ be a family of $\frac{|E_S|-1}{2}$ pairwise edge-disjoint cycles of $S$-type (2,2,0) in $H$ that jointly traverse the remaining copies of the edge $S$. It can then be verified that $\C=\C_I \cup \{ C_{\ell} \} \cup \C_S \cup \bigcup_{i=1}^k \C_i'$ is a spanning cycle decomposition of $H$.
\end{proof}

Again, the analogous result for spanning Euler tours, Theorem~\ref{the:oddSET} below, requires only a few modifications.

\begin{theo}\label{the:oddSET}
Let $H=(V,E)$ be a hypergraph with a  2-vertex cut $S=\{ u,v \}$,  let $E_S=\mset{ e \in E: e=S }$, and assume $|E_S|$ is odd. Let $H_1,\ldots,H_k$ be the connected components of $H \backslash S$; $H_1',\ldots,H_k'$ the corresponding $S$-components; and $H_1^\ast,\ldots,H_k^\ast$ the corresponding $S^\ast$-components. Then $H$ admits a spanning Euler tour if and only if there exists an odd-size  subset $J \subseteq \{ 1,\ldots,k \}$ such that
\begin{enumerate}[(1)]
\item $H_i'$ admits a spanning Euler tour for each $i \in J$; and
\item $H_i^\ast$ admits a spanning Euler tour for each $i \not\in J$.
\end{enumerate}
\end{theo}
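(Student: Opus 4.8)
The plan is to obtain this result from Theorem~\ref{the:odd} by exactly the modifications that turn Theorem~\ref{the:even} into Theorem~\ref{the:evenSET}: the combinatorial scaffolding of the proof of Theorem~\ref{the:odd} is reused unchanged, but every appeal to Lemma~\ref{lem:CD}(2) is replaced by one to Lemma~\ref{lem:CD}(3), and at each step we must check that the incidence graph of the cycle decomposition produced is connected.

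For the forward implication, take a spanning Euler tour $T$ of $H$, put $\F=\{T\}$, and let $\C$ be the cycle decomposition of $\F$ given by Lemma~\ref{lem:RC}; then $\G(\C)=\G(T)$ is connected by Lemma~\ref{lem:CD}(3). Define $\C_S$, the index set $I$ (those $i$ with a cycle of $S$-type $(2,0,2)$ in $\C_i$), the index $\ell$ (the unique index with the cycle of $S$-type $(2,1,1)$), and $J=I\cup\{\ell\}$ exactly as in the proof of Theorem~\ref{the:odd}, so $|J|=|I|+1$ is odd. The new input is the observation, already used in the proof of Theorem~\ref{the:evenSET}, that connectedness of $\G(\C)$ forces the cycles in each $\C_i$ to jointly traverse at least one vertex of $S$ and, more, every connected component of $\G(\C_i)$ to contain $u$ or $v$ (any component of $\G(\C_i)$ avoiding $S$ would be an isolated component of $\G(\C)$, which also contains the e-vertices of $E_S$). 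Consequently Case (iv) of the proof of Theorem~\ref{the:odd} cannot occur, so for each $i\notin J$ we are in Case (iii): adjoining $\frac{|E_S|+1}{2}$ cycles of $S$-type $(2,2,0)$ to $\C_i$ produces a spanning cycle decomposition of $H_i^\ast$, which is connected because every added $(2,2,0)$-cycle runs through both $u$ and $v$. For $i\in J$, Cases (i) and (ii) go through verbatim and build a spanning cycle decomposition $\C_i'$ of $H_i'$ that contains a $(u,v)$-path --- the completed path $P_i$ in Case (i), the surviving $(2,1,1)$-cycle in Case (ii) --- hence is connected by the same argument. Lemma~\ref{lem:CD}(3) then yields (1) and (2).

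For the converse, assume an odd-size $J$ with (1) and (2). For $i\in J$ fix a spanning Euler tour $T_i$ of $H_i'$ with cycle decomposition $\C_i$; since $|E_S|$ is odd and $H_i'\backslash S$ is connected, $\C_i$ has a unique cycle $C_i$ requiring completion, of $S$-type $(2,1,1)$, from which we extract the $(u,v)$-path $P_i$, and we set $\C_i'=\C_i-(\C_i^S\cup\{C_i\})$, where $\C_i^S$ is the set of $(2,2,0)$-cycles of $\C_i$. For $i\notin J$ fix a spanning Euler tour $T_i$ of $H_i^\ast$ with cycle decomposition $\C_i$; since $H_i^\ast$ has an even number of copies of $S$ and $H_i^\ast\backslash S$ is connected, $\C_i$ has no cycle requiring completion, and we set $\C_i'=\C_i-\C_i^S$. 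Now fix $\ell\in J$, put $I=J-\{\ell\}$ (of even size), concatenate the paths $P_i$, $i\in I$, in pairs into a family $\C_I$ of $\frac{|I|}{2}$ cycles of $H$, complete $P_\ell$ to a cycle $C_\ell$ using one copy of the edge $S$, and let $\C_S$ be $\frac{|E_S|-1}{2}$ edge-disjoint $(2,2,0)$-cycles traversing the remaining copies of $S$. Just as in the proof of Theorem~\ref{the:odd}, $\C=\C_I\cup\{C_\ell\}\cup\C_S\cup\bigcup_{i=1}^k\C_i'$ is a spanning cycle decomposition of $H$, and it remains to see that $\G(\C)$ is connected. As in the proof of Theorem~\ref{the:evenSET}, each $\G(\C_i')$ has at most two connected components, one meeting $u$ and one meeting $v$, with every internal vertex of $P_i$ lying in one of them; since $J\neq\emptyset$, the cycles of $\C_I$ together with $C_\ell$ form a connected subgraph passing through both $u$ and $v$ that stitches the $u$-side and the $v$-side of every $\C_i'$ (and all of $\C_S$) into one piece. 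Hence $\G(\C)$ is connected, and Lemma~\ref{lem:CD}(3) completes the proof.

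I expect the main obstacle to be the connectedness bookkeeping in the converse: one must verify that deleting $\C_i^S$ (and, for $i\in J$, the cycle $C_i$) from the connected graph $\G(\C_i)$ leaves at most two components, each of which meets $u$, $v$, or an internal vertex of $P_i$, and that reinserting $\C_I\cup\{C_\ell\}$ reconnects everything --- including the degenerate case $J=\{\ell\}$, where $\C_I=\emptyset$ and connectedness must come from $C_\ell$ alone passing through both $u$ and $v$.
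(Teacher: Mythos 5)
Your proposal is correct and follows essentially the same route as the paper: redo the proof of Theorem~\ref{the:odd} with Lemma~\ref{lem:CD}(3) in place of Lemma~\ref{lem:CD}(2), using connectedness of $\G(\C)$ to rule out the $H_i$-case in the forward direction and verifying connectedness of each $\G(\C_i')$ and of the assembled $\G(\C)$ in the converse, exactly as the paper does (your stitching argument via $\C_I\cup\{C_\ell\}\cup\C_S$ is the same idea as the paper's appeal to $\G(\C_\ell)$ being a connected subgraph containing both vertices of $S$). The only caveat is that ``at most two components'' of $\G(\C_i')$ is not quite accurate when pieces hang off internal vertices of $P_i$, but your closing paragraph already states the correct condition (every component meets $u$, $v$, or an internal vertex of $P_i$), which is all the reconnection argument needs.
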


\begin{proof}
$(\Rightarrow)$ Assume $H$ admits a spanning Euler tour $T$, and let $\F=\{ T \}$. Define  $\C$, $\C_i$  (for  $i \in \{ 1,\ldots,k \}$), $\C_S$, $I$, $\ell$, and $J$ as in the proof of Theorem~\ref{the:odd}.

Take any $i \in \{ 1,\ldots,k \}$ and construct a spanning cycle decomposition $\C_i'$ of one of the hypergraphs  $H_i'$ and $H_i^\ast$.

\smallskip

{\em Case (i):} $i \in J$. This is identical to Cases (i) and (ii) in the proof of Theorem~\ref{the:odd}.

{\em Case (ii):}  $i \not\in J$. Observe that $\C_i$ has a cycle traversing some vertex in $S$, but none of the cycles in $\C_i$ traverse the edges of $E_S$. We construct $\C_i'$, a spanning cycle decomposition for $H_i^\ast$, as in Case (iii) of the proof of Theorem~\ref{the:odd}.

\smallskip

In each case, it can be verified that  $\G(\C_i')$ is connected, whence $\C_i'$ is a cycle decomposition of a spanning Euler tour of the  corresponding hypergraph. Hence (1) and (2) follow.

\medskip

$(\Leftarrow)$ Assume there exists an odd-size subset $J \subseteq \{ 1,\ldots,k \}$ such that (1) and (2) hold, and choose any $\ell \in J$. For each $i \in J$, let $T_i$ be a spanning Euler tour of $H_i'$, and for each $i \not\in J$, let $T_i$ be a spanning Euler tour of $H_i^\ast$. Let $\F_i=\{ T_i\}$ and proceed as in the proof of Theorem~\ref{the:odd} to construct a spanning cycle decomposition $\C$ of $H$.
Observe that, since $\G(\C_{\ell})$ contains both vertices in $S$ and is a connected subgraph of $\G(\C)$, the latter graph is connected and $\C$ is a cycle decomposition of an Euler tour of $H$.
\end{proof}

\section{Conclusion}

We showed that the presence of small vertex cuts allows for the reduction of the problem of existence of a spanning Euler family (tour) in a hypergraph $H$ to its derived hypergraphs. We would hereby like to propose that a similar reduction may be possible in the presence of particular edge cuts in $H$, which would extend a similar analysis for Euler families (tours) and cut edges initiated in \cite{eulerianHG, Quasi-eulerianHG}.

\bigskip

\centerline{\bf Acknowledgement}

\medskip

The authors gratefully acknowledge support by the Natural Sciences and Engineering Research Council of Canada (NSERC). Most of this work was completed during the second author's tenure of an NSERC Undergraduate Student Research Award.


\begin{thebibliography}{99}
\bibitem{Quasi-eulerianHG}
M. A. Bahmanian and M. \v{S}ajna,
Quasi-eulerian hypergraphs,
{\em Electron. J. Combin.} {\bf 24} (2017), \#P3.30, 12 pp.

\bibitem{eulerianHG}
M. A. Bahmanian and M. \v{S}ajna,
Eulerian properties of hypergraphs,
ArXiv:1608.01040 (2015).

\bibitem{HGconnection}
M. A. Bahmanian and M. \v{S}ajna,
Connection and separation in hypergraphs,
{\em  Theory Appl. Graphs} {\bf 2} (2015), no. 2, Art. 5, 24 pp.

\bibitem{GT} J. A. Bondy, U. S. R. Murty, {\em Graph theory}. Graduate Texts in Mathematics {\bf 244}, Springer, New York, 2008.

\bibitem{lonc-naroski} Z. Lonc, P. Naroski,  On tours that contain all edges of a hypergraph, {\em Electron. J. Combin.} {\bf 17} (2010), \# R144, 31 pp.

\bibitem{Lov} L. Lov\'{a}sz,  The factorization of graphs II,
{\em Acta Math. Acad. Sci. Hungar.} {\bf 23} (1972), 223--246.
\end{thebibliography}
\end{document}